\DeclarePairedDelimiter\ceil{\lceil}{\rceil}
\DeclarePairedDelimiter\floor{\lfloor}{\rfloor}
\newtheorem{thm}{Theorem}
\newtheorem{prop}{Proposition}[section]
\newtheorem{cor}[thm]{Corollary}
\newtheorem{defn}[thm]{Definition}
\newtheorem{obs}[prop]{Observation}
\newtheorem{conj}[thm]{Conjecture}
\newtheorem{rem}[prop]{Remark}
\newtheorem{exm}[thm]{Example}
\newtheorem{prob}[thm]{Problem}
\newcommand{\Z}{\mathbb{Z}}
 \xdef\macro@boxdim@YT{\expandonce@YT\boxdim@normal@YT}%
 \xdef\macro@boxdim@YT{#1}%
\def\set@mathmode@YT{
 \gdef\skipin@YT{$}
 \gdef\skipout@YT{$}
 \def\smallfont@YT{\scriptstyle} } 
\title{Group-annihilator graphs realised by finite abelian groups and its properties}
\author{Eshita Mazumdar\footnote{Stat-Math Unit, ISI Bengaluru, Karnataka, India. Email: eshita\_vs (at) isibang.ac.in} and 
Rameez Raja\footnote{Department of Mathematics, NIT Srinagar, Jammu and Kashmir, India. Email: rameeznaqash (at) nitsri.ac.in}}
\begin{document}
\maketitle
\begin{abstract} 
Let $G$ be a finite abelian group viewed a $\mathbb{Z}$-module and let $\mathcal{G} = (V, E)$ be a simple graph. In this paper, we consider a graph $\Gamma(G)$ called as a \textit{group-annihilator} graph. The vertices of $\Gamma(G)$ are all elements of $G$ and two distinct vertices $x$ and $y$ are adjacent in $\Gamma(G)$ if and only if $[x : G][y : G]G = \{0\}$, where $x, y\in G$ and $[x : G] = \{r\in\mathbb{Z} : rG \subseteq \mathbb{Z}x\}$ is an ideal of a ring $\mathbb{Z}$. We discuss in detail the graph structure realised by the group $G$. Moreover, we study the creation sequence, hyperenergeticity and hypoenergeticity of group-annihilator graphs. Finally, we conclude the paper with a discussion on Laplacian eigen values of the group-annhilator graph. We show that the Laplacian eigen values are representatives of orbits of the group action: $Aut(\Gamma(G)) \times G \rightarrow G$.   
\end{abstract}

\textbf{Keywords:}
 Graphs, Orbits, Threshold graph, Energy of a graph, Laplacian eigen values.\\

2010 AMS Classification Code: 05E16, 05C25, 05C50.  
\section{Introduction}
This research article is an exploration of the relationship between the group theoretical properties of an abelian group $G$ and combinatorial (graph theoretical) properties of a graph realised by $G$. There is an intimate relationship between groups and graphs. For example, any graph $\Gamma$ gives rise to its automorphism group. On the other hand, any group with its generating set give rise to a Cayley graph. There are number of constructions of graphs from groups. Some of the graphs arising from groups are: power graph \cite{CS}, generating graph \cite{LS}, intersection graph \cite{ZB} and the commuting graph \cite{BF}. These graphs were introduced to study the information that is contained in the graph about the group. It is to be noted that the generating graph of a simple group is being studied to get an insight that might eventually lead us to a new proof of the classification of simple groups. 

The investigation of combinatorial and graph theoretical properties have also been studied in rings and modules. Beck \cite{Bk} introduced the concept of associating a graph to a commutative ring $R$. He associated a simple graph to $R$, which is known as a zero-divisor graph in the literature. The vertices of a zero-divisor graph are all elements of $R$ with two distinct vertices $x$ and $y$ being adjacent in the graph if and only if $xy = 0$. For more on the zero-divisor graph structure of a ring, see \cite{AdLs, SRR, SR, SRR1, Rd}. This concept was extended to modules over commutative rings in \cite{SR, RR}.

This paper is organised as follows. In section 2, we introduce the \textit{group-annihiltor} graph realised by group $G$ and provide some examples. Section 3 is devoted to study in detail the \textit{group-annihilator}  graph structure realised by groups of different ranks. In section 4, we show that a group-annihilator graph realised by the group $G = \mathbb{Z}/p^{\alpha}\mathbb{Z}$ is a threshold graph. Furthermore, we discuss creation sequence, hyperenergeticity and hypoenergeticity of group-annihilator graphs and assert that it is pointless to determine the classes of hyperenergetic graphs. We show that for each prime $p \geq 7$ there exists a connected threshold graph with $p^2$ vertices such that its energy lies in the interval $(a, b)$, where $a$ is the energy of a group-annihilator graph realised by the group $G$ and $b$ is the energy of a complete graph on $p^2$ vertices. Finally, in Section 5, we discuss Laplacian eigen values of the group-annihilator graph $\Gamma(G)$. We prove a very interesting property of the graph $\Gamma(G)$. In fact, we show that the Laplacian eigen values of $\Gamma(G)$ are representatives of orbits of the group action: $Aut(\Gamma(G)) \times G \rightarrow G$.

\section{Preliminaries}
It is clear that $\mathbb{Z}$ is a commutative ring with unity under usual addition and multiplication. 
Consider a finite abelian non-trivial group $G$ with identity element $0$ and view $G$ as a $\mathbb{Z}$-module. For $a\in G$, set $$[a : G] =\{x\in \mathbb{Z} ~|~ xG\subseteq \mathbb{Z}a\},$$ 
which clearly is an ideal of $\mathbb{Z}.$ For $a \in G$, $G/\mathbb{Z}a$ is a $\mathbb{Z}$-module. So  $[a : G]$ is a annihilator of $G/\mathbb{Z}a$. We call $[a:G]$ as $a$-annihilator of $G.$ Also, we call an element $a$ as an \textit{ideal-annihilator} of $G$ if there exists a non-zero element $b$ of $G$ such that $[a : G][b : G]G = \{0\}$, where $[a : G][b : G]$ denotes the product of ideals of $\mathbb{Z}$. The element $0$ is a trivial ideal-annihilator of $G$, since $[0 : G][b : G]G = ann(G)[b : G]G = \{0\}$, $ann(G)$ is an annihilator of $G$ in $\mathbb{Z}$. 

Given an abelian group $G$, we define the \textit{group-annihilator} graph to be the graph $\Gamma(G) = (V(\Gamma(G))$, $E(\Gamma(G)))$ with vertex set $V(\Gamma(G))= G$ and for two distinct $a, b\in V(\Gamma(G))$, the vertices $a$ and $b$ are adjacent in $\Gamma(G)$ if and only if $[a : G][b : G]G = \{0\}$, that is, $E(G) = \{(a, b)\in G \times G : [a : G][b : G]G = \{0\}\}$. 

By the definition of group-annihilator graph, it appears that the vertex $0$ is adjacent to all vertices of the graph and the structure of $\Gamma(G)$ is simple, that is, $\Gamma(G)$ is without self loops and parallel edges. Furthermore, the eccentricity of 0 (distance between $0$ and farthest verrtex from $0$) in  $\Gamma(G)$ is 1, which is in fact the minimum eccentricity of the graph. Thus $0$ is the central vertex of the graph. The maximum eccentricity of $\Gamma(G)$ is atmost 2, since the distance between any two vertices $\Gamma(G)$ is atmost 2. If some graph $\mathcal{G}$ contains a cycle, then the girth (length of the smallest cycle) and diameter of $\mathcal{G}$ denoted by $gr(\mathcal{G})$ and $diam(\mathcal{G})$, respectively, are related by the inequality $gr(\mathcal{G}) \leq 2diam(\mathcal{G}) + 1$. It follows by the inequality that $gr(\Gamma(G)) \leq 5$. However, it is clear from the structure of the group-annihilator graph that if $\Gamma(G)$ contains a cycle, then  $gr(\Gamma(G)) = 3$.\\

Let $\mathcal{G}_1$ and $\mathcal{G}_2$ be two simple connected graphs, recall a mapping $\phi: \mathcal{G}_1 \rightarrow \mathcal{G}_2$ is a \textit{homomorphism} if it perserves edges, that is, for any edge $(u, v)$ of $\mathcal{G}_1$, $(\phi(u), \phi(v))$ is an edge of $\mathcal{G}_2$, where $u, v\in V(\mathcal{G}_1)$. A homomorphism  $\phi: \mathcal{G}_1 \rightarrow \mathcal{G}_2$ is faithful when there is an edge between two preimages $\phi^{-1}(u)$ and $\phi^{-1}(u)$ such that $(u, v)$ is an edge of $\mathcal{G}_2$, a faithful bijective homomorphism is an \textit{isomorphism} and in this case we write $\mathcal{G}_1 \cong \mathcal{G}_2$. An isomorphism from $\mathcal{G}$ to itself is an \textit{automorphism} of $\mathcal{G}$, it is well known that set of automorphisms of $\mathcal{G}$ forms a group under composition, we denote the group of automorphisms of $\mathcal{G}$ by $Aut(\mathcal{G})$. Understanding the automorphism group of a graph is a guiding principle for understanding objects by their symmetries.

Let $\mathcal{G} = (V, E)$ be a simple graph. Consider the group action: $Aut(\mathcal{G}) ~acting~on ~V(\mathcal{G})$ by some permutation of $Aut(\mathcal{G})$. That is; 

\hskip .9cm \hskip .9cm \hskip .9cm \hskip .9cm \hskip .9cm \hskip .9cm $Aut(\mathcal{G}) \times V(\mathcal{G}) \rightarrow V(\mathcal{G})$, 

\hskip .9cm \hskip .9cm \hskip .9cm \hskip .9cm \hskip .9cm \hskip .9cm \hskip .9cm \hskip .3cm $\sigma(v) = u$,

\noindent where $\sigma \in Aut(\mathcal{G})$ and $v, u\in V(\mathcal{G})$ are any two vertices of $\mathcal{G}$. We call this group action as \textit{symmetric action}. The authors in \cite{KA} have considered the action: $Aut(G) \times G \rightarrow G$, where $Aut(G)$ is an automorphism group of $G$ and studied the $Aut(G)$-orbits in $G$. They exhibited $Aut(G)$-orbits in $G$ as elements of a fundamental partially ordered set and investigated an interesting interplay of properties of partially ordered sets and finite abelian groups. 

There is an advantage for knowing the orbits of group action: $Aut(\Gamma(G)) \times V(\Gamma(G)) \rightarrow V(\Gamma(G))$, because we get some structural information about some elements of group $G$ from $\Gamma(G)$, and as a consequence we do not consider all elements of $G$ to decode the symmetry of $\Gamma(G)$. We also explore this information to reveal some interesting spectral propeties of the graph $\Gamma(G)$.

Let $Aut(G)$ be an automorphism group of $G$. Under the action of $Aut(G)$ on $G$, we have the set $Aut(G)\setminus G$ of $Aut(G)$-orbits in $G$.  For each prime $p$, let $G_p$ denote the elements of $G$ which are annihilated by some power of $p$. Then $G$ is the direct sum of $p$-subgroups $G_p$. On the other hand, every finite abelian $p$-group is isomorphic to the group of type $\lambda = (\lambda_1, \lambda_2, \cdots, \lambda_r)$:
\begin{center}
$G_{\lambda, p} = \mathbb{Z}/p^{\lambda_1}\mathbb{Z} \oplus \mathbb{Z}/p^{\lambda_2}\mathbb{Z} \oplus \cdots \oplus \mathbb{Z}/p^{\lambda_r}\mathbb{Z}$,
\end{center}
where $\lambda = \lambda_1 \geq \lambda_2 \geq \cdots \geq \lambda_r$ represents a unique partition. Therefore the automorphism group action is on each element of a finite abelian $p$-groups.

There are some well known formulae for the cardinality of the set $Aut(G_{\lambda, p})$ $\setminus$ $G_{\lambda, p}$. Miller \cite{M} proved that;

\begin{center}$|Aut(G_{\lambda, p})$ $\setminus$ $G_{\lambda, p}| = (\lambda_r + 1)\prod\limits_{i = 1}^{r - 1}(\lambda_i - \lambda_{i + 1} + 1)$,\end{center}
where $Aut(G) \setminus G = \prod\limits_{p} Aut(G_p) \setminus G_p$.\\

Schwachh\"ofer and Stroppel \cite{SS} also proved that if $\tau_1 < \tau_2 < \cdots < \tau_s$ are $s$ distinct natural numbers occuring in the partition $\lambda$, then;
\begin{center}
$|Aut(G_{\lambda, p})$ $\setminus$ $G_{\lambda, p}| = \sum\limits_{k = 0}^{s} \sum\limits_{1\leq \tau_1 < \tau_2 \cdots \tau_k\leq s} \tau_{i_{k}}\prod\limits_{j = 1}^{k - 1}(\tau_{i_j} - \tau_{i_{j+1}} - 1)$.
\end{center}

\section{On group-annihilator graphs realised by G}
This section is devoted to study the group-annihilator graphs realised by groups of different ranks such as: 
$$(\mathbb{Z}/p\mathbb{Z})\times (\mathbb{Z}/p\mathbb{Z}) \cdots \times (\mathbb{Z}/p\mathbb{Z}), (\mathbb{Z}/p^{\alpha}\mathbb{Z}) \text{ and } \Z/p^{\alpha}\Z \times\Z/p^{\beta}\Z \times \Z/p^{\gamma}.$$ 
Let $\lambda = (\lambda_1, \lambda_2, \cdots, \lambda_r)$ be a partition of $n$ denoted by $\lambda \vdash n$. For any  $\mu \vdash n$, we have an abelian group of order $p^n$ and conversely every abelian group corresponds to some partion of $n$. In fact, if $H_{\mu, p} =  \mathbb{Z}/p^{{\mu}_1}\mathbb{Z} ~\oplus~ \mathbb{Z}/p^{{\mu}_2}\mathbb{Z} ~\oplus~ \cdots ~\oplus~ \mathbb{Z}/p^{{\mu}_r}\mathbb{Z}$ is a subgroup of $G_{\lambda, p}$, then $\mu_1 \leq \lambda_1, \mu_2 \leq \lambda_2, \cdots, \mu_r \leq  \lambda_r$. If these inequalities holds we write $\mu \subset \lambda$, that is a \textquotedblleft containtment order\textquotedblright on partitions. For example, a $p$-group $\mathbb{Z}/p^{5}\mathbb{Z} ~\oplus~ \mathbb{Z}/p\mathbb{Z} ~\oplus~ \mathbb{Z}/p\mathbb{Z}$ is of type $\lambda = (5, 1, 1)$. The possible types for its subgroup are: $(5, 1, 1), (4, 1, 1), (3, 1, 1), (2, 1, 1), (1, 1, 1), 2(5, 1), 2(4, 1), 2(3,1), 2(2, 1), 2(1, 1), (5), (4), (3), (2), 2(1)$.

Note that the types $(5, 1), (4, 1), (3,1), (2, 1), (1, 1)$ are appearing twice in the sequence of partitions for a subgroup.

For any $a \in (\mathbb{Z}/p\mathbb{Z})^n$, let $\lambda = (1, 1, \cdots, 1) = (1^n)$. A group of type $\lambda$ is nothing but the $\mathbb{Z}/p\mathbb{Z}$-vector space $\mathbb{Z}/p\mathbb{Z} ~\oplus~ \mathbb{Z}/p\mathbb{Z} ~\oplus~ \cdots ~\oplus~ \mathbb{Z}/p\mathbb{Z}$. Its subgroups are of type $(1^r )$, where $0\leq r \leq n$. Clearly, the ideal associated with any $a \in (\mathbb{Z}/p\mathbb{Z})$  is $a\mathbb{Z}$. Therefore the group-annihilator graph realised by $(\mathbb{Z}/p\mathbb{Z})$ is a star graph with $p$ vertices.

Let $p$ be a prime and let $G =\Z/p^{\alpha}\Z$ be a cyclic group of order $p^{\alpha} $. Then, for $i \in [0, \alpha]$, $ \mathcal{O}_{\alpha,p^i} = \{p^ib (\bmod p^{\alpha})\mid b \in \Z, (b,p) =1\}$ is the $p^i$-th orbit of $G$. Moreover, for $0\leq i< j \leq \alpha,$ 
$$p^ib \equiv p^j b' (\bmod p^{\alpha})\text{ where } (b,p) =1 \text{ and } (b',p)=1.$$
Therefore, $p^{(j-i)}b' \equiv b (\bmod p^{(\alpha-i)}),$ which is a contradiction. Thus, for $i \neq j$, $\mathcal{O}_{\alpha,p^i} \cap  \mathcal{O}_{\alpha,p^j} = \emptyset$.

Any element $a \in \Z/p^{\alpha}\Z$ can be expressed as; $$a \equiv p^{\alpha-1}b_1 + p^{\alpha-2} b_2 +\cdots + p b_{\alpha-1 } +b_{\alpha} (\bmod p^{\alpha}),$$
where $b_i \in [0,p-1].$ If $a \in \mathcal{O}_{\alpha,1}$,
then $b_{\alpha} \neq 0.$ So, $|\mathcal{O}_{\alpha,1}| = p^{\alpha-1} (p-1) = \phi (p^{\alpha})$.

If $a' \in \mathcal{O}_{\alpha,p}$, then for some $a \in \mathcal{O}_{\alpha,1}$ $a' = pa$, that is, $b_{\alpha}\neq 0$, so $|\mathcal{O}_{\alpha,p}|
= \frac{\phi (p^{\alpha})}{p}$. Similarly, for $i \in [0,\alpha]$, we have
  $|\mathcal{O}_{\alpha,p^i}| = \frac{\phi(p^{\alpha})}{p^i}.$ 

\begin{prop} Let $G =\Z/p^{\alpha}\Z$ be a cyclic group of $p^{\alpha} $ order where $p$ is prime and $\alpha \ge 2$. Then for each $a\in G$ the $a-$annihilator of $G$ is $$[a: G] = p^i \Z \text{ for each } a \in  \mathcal{O}_{\alpha,p^i} \text{ and } i\in \{0,1,2\cdots,\alpha\}$$
\end{prop}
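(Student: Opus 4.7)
The strategy is direct computation using the $p$-adic valuation. The key observation is that for a cyclic $p$-group, both the submodule $\mathbb{Z}a$ and the image $xG$ are determined entirely by the $p$-adic valuation of the relevant integer, so the containment condition $xG\subseteq \mathbb{Z}a$ reduces to an inequality of valuations.

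\textbf{Step 1 (Identify $\mathbb{Z}a$).} Fix $a\in \mathcal{O}_{\alpha,p^i}$. By the definition of the orbit given just before the proposition, $a\equiv p^ib\pmod{p^\alpha}$ with $\gcd(b,p)=1$. Since $b$ is a unit modulo $p^\alpha$, multiplication by $b$ is a bijection of $G$, and consequently $\mathbb{Z}a=\mathbb{Z}(p^ib)=\mathbb{Z}p^i$ inside $G$, which is the unique cyclic subgroup of $G$ of order $p^{\alpha-i}$.

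\textbf{Step 2 (Identify $xG$).} For any $x\in\mathbb{Z}$, write $x=p^ju$ with $\gcd(u,p)=1$ and $j\ge 0$ (with the convention $j\ge\alpha$ when $p^\alpha\mid x$). Because $G$ is cyclic with generator $1$ and $u$ is invertible modulo $p^\alpha$, I get $xG=p^jG$; this equals the cyclic subgroup of $G$ generated by $p^j$ when $j<\alpha$, and $\{0\}$ when $j\ge\alpha$.

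\textbf{Step 3 (Reduce the containment to a valuation inequality).} Combining Steps 1 and 2, the condition $xG\subseteq\mathbb{Z}a$ becomes $p^jG\subseteq p^iG$. In a cyclic $p$-group this holds if and only if $j\ge i$, that is, $v_p(x)\ge i$, which is equivalent to $x\in p^i\mathbb{Z}$.

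\textbf{Step 4 (Conclude).} Therefore
\[
[a:G]=\{x\in\mathbb{Z}:xG\subseteq\mathbb{Z}a\}=\{x\in\mathbb{Z}:v_p(x)\ge i\}=p^i\mathbb{Z},
\]
as claimed. Since none of the steps require choices depending on the particular representative $b$, the ideal depends only on the orbit $\mathcal{O}_{\alpha,p^i}$. There is no genuine obstacle in this argument; the only point that deserves care is the case $j\ge\alpha$ (when $x$ annihilates $G$ altogether), which is automatically swallowed by $p^i\mathbb{Z}$ since $p^\alpha\mathbb{Z}\subseteq p^i\mathbb{Z}$ for $i\le\alpha$.
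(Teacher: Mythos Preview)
Your proof is correct and follows essentially the same approach as the paper: both use that $b$ is a unit mod $p^{\alpha}$ to identify $\mathbb{Z}a$ with the subgroup generated by $p^{i}$, and then reduce the containment $xG\subseteq\mathbb{Z}a$ to a $p$-divisibility condition. The only difference is cosmetic---you phrase everything via the $p$-adic valuation, while the paper checks the two inclusions $p^{i}\mathbb{Z}\subseteq[a:G]$ and $[a:G]\subseteq p^{i}\mathbb{Z}$ by hand (the latter by applying $y$ to the generator $1$).
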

where $\mathcal{O}_{\alpha,p^i} = \{p^ib (\bmod p^{\alpha})\mid b \in \Z, (b,p) =1\}$ is the $p^i$-th orbit of the group $\Z/p^{\alpha}\Z.$ 

\begin{proof}
As $\mathcal{O}_{\alpha,p^i}$ is an orbit of $\Z/p^{\alpha} \Z$ so $$\Z/p^{\alpha}\Z = \dot\bigcup_{i=0}^{i=\alpha}\mathcal{O}_{\alpha,p^i}.$$
Clearly, $[0: G] = p^{\alpha}\Z ,$ where $\{0\} =  \mathcal{O}_{\alpha,p^{\alpha}}.$

For $a \in \mathcal{O}_{\alpha,p^i}$ where $i \in [0,\alpha-1]$, we have $a= p^i b$ for some $b\in \Z$ and $(b,p) =1$. Let $y =p^i k $ for some $k.$ Now for any $g \in G$, 
$yg = p^ib (b^{-1} kg) \in \Z a$. Theorefore, $p^i \Z \subset [a: G]. $ On the other hand, $y \in  [a:G]$ implies $yG \subset \Z a$, therefore $y= a k'$ for some integer $k'.$ So, $y = p^i k' b \in p^i \Z.$ 
\end{proof}

\begin{obs}\label{1}
If we consider the group $G = \Z/p^{\alpha}\Z$ for a prime $p$ and an integer $\alpha\ge 2$, then the group-annihilator graph is defined as follows:
$$\Gamma (G) = (V(\Gamma(G)), E(\Gamma(G))), \text{ where } V(\Gamma(G)) = \Z/p^{\alpha}\Z$$
and for $a,b \in G$, 
$$a \text{ is adjacent to } b \text{ iff } i+j \geq \alpha,$$ where
$a \in \mathcal{O}_{\alpha,p^i}$, $b \in 
\mathcal{O}_{\alpha,p^j}$. 
\end{obs}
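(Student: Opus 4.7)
The plan is to reduce the observation to a direct computation using the preceding proposition, which already identifies the annihilator ideal $[a:G]$ for each $a \in G$. By that proposition, whenever $a \in \mathcal{O}_{\alpha, p^i}$ and $b \in \mathcal{O}_{\alpha, p^j}$, we have $[a:G] = p^i\mathbb{Z}$ and $[b:G] = p^j\mathbb{Z}$, so the product of ideals is simply $[a:G][b:G] = p^{i+j}\mathbb{Z}$.

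Next I would substitute this into the adjacency criterion from the definition of the group-annihilator graph, namely that $a$ and $b$ are adjacent in $\Gamma(G)$ iff $[a:G][b:G]G = \{0\}$. With $G = \mathbb{Z}/p^{\alpha}\mathbb{Z}$ this becomes the condition $p^{i+j}\mathbb{Z} \cdot (\mathbb{Z}/p^{\alpha}\mathbb{Z}) = \{0\}$, i.e.\ $p^{i+j}G = \{0\}$. Since $G$ is cyclic of order $p^{\alpha}$, the annihilator of $G$ in $\mathbb{Z}$ is precisely $p^{\alpha}\mathbb{Z}$, so $p^{i+j}G = \{0\}$ holds if and only if $p^{\alpha} \mid p^{i+j}$, which is equivalent to $i+j \geq \alpha$.

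Finally, I would note that this equivalence is uniform over the choice of representatives within the orbits $\mathcal{O}_{\alpha, p^i}$ and $\mathcal{O}_{\alpha, p^j}$, since the proposition shows that the ideal $[a:G]$ depends only on the orbit to which $a$ belongs. Combined with the orbit decomposition $\mathbb{Z}/p^{\alpha}\mathbb{Z} = \dot{\bigcup}_{i=0}^{\alpha} \mathcal{O}_{\alpha, p^i}$ already established before the proposition, this shows that adjacency in $\Gamma(G)$ is governed entirely by the indices $i, j$ of the orbits, and the stated inequality $i+j \geq \alpha$ characterises it completely.

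There is essentially no obstacle here; the observation is a direct corollary of the proposition immediately preceding it, so the \textquotedblleft hard part\textquotedblright\ (computing the annihilator ideals of individual orbit elements) has already been carried out. The only care needed is to verify that the ideal-product $p^i\mathbb{Z} \cdot p^j\mathbb{Z}$ in $\mathbb{Z}$ equals $p^{i+j}\mathbb{Z}$ and to handle the boundary cases $i=\alpha$ or $j=\alpha$ (where the corresponding orbit is $\{0\}$) consistently with the convention that $0$ is adjacent to every other vertex of $\Gamma(G)$.
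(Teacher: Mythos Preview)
Your proposal is correct and follows essentially the same approach as the paper: both rely on the preceding proposition to identify $[a:G]=p^i\mathbb{Z}$ for $a\in\mathcal{O}_{\alpha,p^i}$, then deduce the adjacency condition from the definition of $\Gamma(G)$. If anything, your argument is cleaner and more explicit than the paper's brief description, which simply enumerates the adjacencies orbit by orbit rather than deriving the inequality $i+j\geq\alpha$ via the ideal product $p^{i+j}\mathbb{Z}$ and the annihilator of $G$.
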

Here, we present a brief description of the above observation. Clearly, by definition of the group-annihilator graph, an element $0$ of $G$ is adjacent to all vertices in $\Gamma (G)$, the relatively prime elements of $G$ are adjacent to $0$ only in $\Gamma (G)$. Furthermore, elements of the orbit $\mathcal{O}_{\alpha,p}$ are adjacent to $0$ and elements of the orbit $\mathcal{O}_{\alpha,p^{\alpha -1}}$, the elements of orbit $\mathcal{O}_{\alpha,p^2}$ are adjacent to $0$ and elements of orbits $\mathcal{O}_{\alpha,p^{\alpha -1}}$,  $\mathcal{O}_{\alpha,p^{\alpha -2}}$. Thus for $k \geq 1$, elements of the orbit
 $\mathcal{O}_{\alpha,p^k}$ are adjacent to elements of the orbits $\mathcal{O}_{\alpha,p^{\alpha -k}}$,  $\mathcal{O}_{\alpha,p^{\alpha -k + 1}}, \cdots, \mathcal{O}_{\alpha,p^{\alpha -1}}$.

 \begin{thm} Let $\alpha $ be a positive integer.
For the $p$-group $G = (\Z/p^{\alpha}\Z)^{\ell}  $ of rank $\ell\geq2,$ and $(a_1,\ldots,a_l)\in G$, the 
$(a_1,\ldots,a_l)$-annihilator of $G$ is $p^{\alpha}\Z.$ In particular the corresponding group-annihilator graph is a complete graph.

\end{thm}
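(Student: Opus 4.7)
The plan is to reduce the graph-theoretic claim to a purely algebraic one: for every $a=(a_1,\ldots,a_\ell)\in G$, the ideal $[a:G]$ equals $p^\alpha\Z$. Granting that, the completeness of $\Gamma(G)$ is immediate, since for any two distinct $a,b\in G$ one has $[a:G][b:G]G = p^\alpha\Z\cdot p^\alpha\Z\cdot G = p^{2\alpha}G=\{0\}$, so every pair of distinct vertices is adjacent and $\Gamma(G)$ is complete on $p^{\alpha\ell}$ vertices.

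For the computation of $[a:G]$, the inclusion $p^\alpha\Z\subseteq [a:G]$ is trivial: $p^\alpha$ annihilates $G$, so $p^\alpha G=\{0\}\subseteq \Z a$. For the reverse inclusion I would fix $x\in[a:G]$ and write $x=p^t u$ with $(u,p)=1$ and $0\le t\le\alpha$ (taking $t=\alpha$ when $x\equiv 0\pmod{p^\alpha}$). Since $u$ is a unit modulo $p^\alpha$, multiplication by $u$ is a bijection of $G$, so $xG=p^t G\cong (\Z/p^{\alpha-t}\Z)^\ell$. The hypothesis $xG\subseteq \Z a$ then places $xG$ inside the cyclic group $\Z a$, forcing $xG$ itself to be cyclic.

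This is the step where the rank hypothesis $\ell\ge 2$ is essential, and it is the main point of the proof. If $t<\alpha$, then $(\Z/p^{\alpha-t}\Z)^\ell$ has $p^\ell\ge p^2$ elements of order dividing $p$, one from each summand, whereas any cyclic $p$-group has a unique subgroup of order $p$ and hence at most $p$ such elements. Cyclicity of $xG$ therefore forces $\alpha-t\le 0$, i.e.\ $x\in p^\alpha\Z$, completing the computation $[a:G]=p^\alpha\Z$.

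The case $a=0$ is subsumed, since then $\Z a=\{0\}$ and the same argument (or directly the identity $[0:G]=ann(G)$) yields $[0:G]=p^\alpha\Z$. The only genuinely nontrivial ingredient is the elementary-abelian count comparing the $p$-torsion in $(\Z/p^{\alpha-t}\Z)^\ell$ with the $p$-torsion in a cyclic $p$-group; every other step is routine bookkeeping in the $p$-adic valuation of $x$, and I expect no further obstacle.
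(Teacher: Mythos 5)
Your proof is correct, but it takes a genuinely different route from the paper's. The paper argues coordinate-wise: writing $a_i=p^{j_i}a_i'$ and picking the index $k$ with minimal valuation $j_k$, it tests the condition $yG\subseteq\Z(a_1,\ldots,a_\ell)$ on a standard basis vector $e_r$ with $r\neq k$ (this is where $\ell\ge 2$ enters there); the vanishing of the $k$-th coordinate of $ye_r$ forces $p^{\alpha-j_k}\mid n$ in $ye_r=n\,a$, and then the $r$-th coordinate gives $y\equiv 0\pmod{p^{\alpha}}$ because $j_r\ge j_k$. You instead factor $x=p^tu$ with $u$ a unit mod $p^{\alpha}$, identify $xG=p^tG\cong(\Z/p^{\alpha-t}\Z)^{\ell}$, and note that the containment $xG\subseteq\Z a$ forces $xG$ to be cyclic, which a count of $p$-torsion ($p^{\ell}\ge p^2$ elements of order dividing $p$ versus at most $p$ in a cyclic group) rules out unless $t\ge\alpha$. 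Your version is coordinate-free, isolates exactly where the rank hypothesis is used, handles $a=0$ uniformly, and the torsion-count principle is more robust (it immediately says something about when $p^tG$ can sit inside a cyclic submodule of a general finite abelian $p$-group); the paper's computation is more elementary bookkeeping and matches the style it reuses for the rank-$3$ classification in Theorem 3 and the appendix. Both establish $[a:G]=p^{\alpha}\Z$ and then completeness via $[a:G][b:G]G=p^{2\alpha}G=\{0\}$, exactly as in your reduction.
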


\begin{proof}
Let $\mathcal{O}_{\alpha, p^i}$ be the $p^i$-th orbit of the group $\Z/p^{\alpha}\Z$ for $0\leq i\leq \alpha.$ 
Let $(a_1,\ldots,a_l)\in G$ and $a_i = p^{j_i}{a_i}' \in 
\mathcal{O}_{\alpha, p^{j_i}}$ where
$1\leq i\leq l$ and $j_k$ be the least among all $j_i.$
Clearly $p^{\alpha}(x_1,\ldots,x_l) \in Z(a_1,\ldots,a_l),$
for any $(x_1,\ldots,x_l)\in G.$
Now let $y\in [(a_1,\ldots,a_l):G]$. Choose an element $b_{k,r}$ of $G$ such that there exists a position $j_r$ with $r\neq k$ 
where it's value is $1,$ in other positions it takes the value $0.$ 
Therefore $yb_{k,r} = n (p^{i_1}a_1',....p^{i_l}a_l')$, for some integer $n$. This gives $p^{\alpha-j_k} \mid n$ and $y \equiv p^{\alpha-j_k+j_r}a_r' (\bmod p^{\alpha}).$ This completes the first part.

On the other hand, for any two elements $a,b\in G$, $[a:G][b:G]G =\{0\}$. So the associated group annihilator graph forms a complete graph.
\end{proof}

Here the action of $Aut(\Gamma((\mathbb{Z}/p\mathbb{Z})^{\ell}))$ on $(\mathbb{Z}/p\mathbb{Z})^{\ell}$ is transitive, since an automorphism of $\Gamma((\mathbb{Z}/p\mathbb{Z})^{\ell})$ map any vertex to any other vertex and this does not place any restriction on where any of the other $p^{\ell} - 1$ vertices are mapped, as they are all mutually connected in $\Gamma((\mathbb{Z}/p\mathbb{Z})^{\ell})$. This implies $Aut(\Gamma((\mathbb{Z}/p\mathbb{Z})^{\ell})) \setminus(\mathbb{Z}/p\mathbb{Z})^{\ell}$ is a single orbit of order $p^{\ell}$.

\begin{exm} Let $G = \mathbb{Z}/8\mathbb{Z}$. Consider the group action: $Aut(\Gamma(G)) ~ acting ~on ~G$. The orbits of this action are: $Orb(0) = \{0\}$, $Orb(1) = \{1, 3, 5, 7\}$, $Orb(2) = \{2, 6\}$, $Orb(4) = \{4\}$. The orbits of elements $3, 5, 7$ are same as the orbit of $1$ and the orbit of $6$ is same as the orbit of $2$.  Therefore, the group $G$ has $3$ orbits of nonzero elements under the action of $Aut(\Gamma(G))$ represented by $1, 2, 2^2$. Now using the concept mention above we have the following group-annihilator graph realised by $G$.\end{exm} 

\begin{align*}
\begin{pgfpicture}{7cm}{-3cm}{4cm}{2cm}
\pgfnodecircle{Node1}[fill]{\pgfxy(7,2)}{0.1cm}
\pgfnodecircle{Node2}[fill]{\pgfxy(4,-.5)}{0.1cm}
\pgfnodecircle{Node3}[fill]{\pgfxy(5,-.5)}{0.1cm}
\pgfnodecircle{Node4}[fill]{\pgfxy(6,-.5)}{0.1cm}
\pgfnodecircle{Node5}[fill]{\pgfxy(7, -.5)}{0.1cm}
\pgfnodecircle{Node6}[fill]{\pgfxy(8, -.5)}{0.1cm}
\pgfnodecircle{Node7}[fill]{\pgfxy(9,-.5)}{0.1cm}
\pgfnodecircle{Node8}[fill]{\pgfxy(10, -.5)}{0.1cm}
\pgfnodeconnline{Node1}{Node2}
\pgfnodeconnline{Node1}{Node3}
\pgfnodeconnline{Node1}{Node4}
\pgfnodeconnline{Node1}{Node5}
\pgfnodeconnline{Node1}{Node6}
\pgfnodeconnline{Node1}{Node7}
\pgfnodeconnline{Node1}{Node8}
\pgfnodeconnline{Node4}{Node5}
\pgfnodeconnline{Node5}{Node6}
\pgfputat{\pgfxy(6.9, 2.3)}{\pgfbox[left,center]{$\bar{0}$}}
\pgfputat{\pgfxy(3.8, -.9)}{\pgfbox[left,center]{$\bar{1}$}}
\pgfputat{\pgfxy(4.9, -.9)}{\pgfbox[left,center]{$\bar{3}$}}
\pgfputat{\pgfxy(5.9,-.9)}{\pgfbox[left,center]{$\bar{2}$}}
\pgfputat{\pgfxy(6.9, -.9)}{\pgfbox[left,center]{$\bar{4}$}}
\pgfputat{\pgfxy(7.9,-.9)}{\pgfbox[left,center]{$\bar{6}$}}
\pgfputat{\pgfxy(8.9, -.9)}{\pgfbox[left,center]{$\bar{5}$}}
\pgfputat{\pgfxy(10, -.9)}{\pgfbox[left,center]{$\bar{7}$}}
\pgfputat{\pgfxy(4.9, -1.8)}{\pgfbox[left,center]{$Figure \hskip .1cm 1. \hskip .2cm \Gamma(\mathbb{Z}/8\mathbb{Z})$}}
\end{pgfpicture}
\end{align*}

A finite $p$-group $G = \Z/p^{\alpha}\Z \times\Z/p^{\beta}\Z \times \Z/p^{\gamma}\Z $ of rank $3,$ can be expressed as; $$G = \dot\bigcup_{i=0}^{\alpha}\mathcal{O}_{\alpha,p^i} \times \dot\bigcup_{j=0}^{\beta}\mathcal{O}_{\beta,p^j}\times\dot\bigcup_{k=0}^{\gamma} \mathcal{O}_{\gamma,p^k}$$ where $\alpha \leq \beta  \leq \gamma$ and $\mathcal{O}_{s,p^i}$ is the $p^i$-th orbit of the group $\Z/p^s\Z.$\\

\begin{thm}
Let $G = \Z/p^{\alpha}\Z \times\Z/p^{\beta}\Z \times \Z/p^{\gamma}\Z $  be a $p$-group where
$\alpha < \beta <\gamma. $ Then for each $(a,b,c) \in G$ the $(a,b,c)$-annihilator of $G$ are following:
\begin{itemize}
\item Let $a \in \mathcal{O}_{\alpha, p^{\alpha}}$, 
\begin{itemize} 
\item For $b\in \mathcal{O}_{\beta, p^{\beta}},$
%\begin{itemize} 
%\item $[(a,b,c) : G] = $
\begin{eqnarray*}
[(a,b,c) : G]&=& p^{\gamma} \mathbb{Z} \text{ for } c\in \mathcal{O}_{\gamma, p^{\gamma}},\\
&=&p^{\beta}\mathbb{Z} \text{ for } c \in \mathcal{O}_{\gamma, p^i} \text{ where } 0\leq i \leq \beta-1,\\
&=&p^{i}\mathbb{Z} \text{ for } c \in \mathcal{O}_{\gamma, p^i} \text{ where } \beta \leq i \leq \gamma-1.\end{eqnarray*}

\item For $b\in \mathcal{O}_{\beta, p^j}$ where $0\leq j \leq \beta -1,$
%\begin{itemize} 
%\item $[(a,b,c) : G] = $
\begin{eqnarray*}
[(a,b,c) : G]&=& p^{\gamma} \mathbb{Z} \text{ for } c\in \mathcal{O}_{\gamma, p^{\gamma}},\\
&=& p^{\beta} \mathbb{Z} \text{ for } c\in \mathcal{O}_{\gamma, p^i}\text{ where } 0\leq i \leq j,\\
&=&p^{i+ \beta - j }\mathbb{Z} \text{ for } c \in \mathcal{O}_{\gamma, p^i} \text{ where } j +1 \leq i \leq \gamma -\beta + j,\\
&=&p^{\gamma}\mathbb{Z} \text{ for } c \in \mathcal{O}_{\gamma, p^i} \text{ where } \gamma - \beta +j +1 \leq i \leq \gamma- 1.\end{eqnarray*}
%\end{itemize}
\end{itemize}

\item Let $a \in \mathcal{O}_{\alpha, p^k}$, where $0\leq k\leq \alpha-1$ 
\begin{itemize} 
\item For $b\in \mathcal{O}_{\beta, p^{\beta}},$
%\begin{itemize} 
%\item $[(a,b,c) : G] = $
\begin{eqnarray*}
[(a,b,c) : G]&=& p^{\gamma} \mathbb{Z} \text{ for } c\in \mathcal{O}_{\gamma, p^{\gamma}},\\
&=&p^{\beta}\mathbb{Z} \text{ for } c \in \mathcal{O}_{\gamma, p^i} \text{ where } 0\leq i \leq \beta - \alpha+k,\\
&=&p^{i+\alpha-k}\mathbb{Z} \text{ for } c \in \mathcal{O}_{\gamma, p^i} \text{ where } \beta - \alpha + k +1 \leq i \leq \gamma - \alpha +k-1,\\
&=&p^{\gamma}\mathbb{Z} \text{ for } c \in \mathcal{O}_{\gamma, p^i} \text{ where } \gamma - \alpha +k \leq i \leq \gamma -1.\\\end{eqnarray*}

%\end{itemize}
\end{itemize}

\begin{itemize} 
\item For $b\in \mathcal{O}_{\beta, p^{j}},$ where $0 \leq j \leq \beta -1.$
%\begin{itemize} 
%\item $[(a,b,c) : G] = $
\begin{eqnarray*}
[(a,b,c) : G]&=& p^{\gamma} \mathbb{Z} \text{ for } c\in \mathcal{O}_{\gamma, p^{\gamma}},\\
&=&p^{\beta}\mathbb{Z} \text{ for } c \in \mathcal{O}_{\gamma, p^i} \text{ where } i \leq j, 0\leq i \leq \beta-\alpha \text{ and } 0 \leq i \leq \gamma - \beta +j,\\
&=&p^{\beta +i-j}\mathbb{Z} \text{ for } c \in \mathcal{O}_{\gamma, p^i} \text{ where } i > j, 0\leq i \leq \beta-\alpha \text{ and } 0 \leq i \leq \gamma - \beta +j,\\
&=&p^{i+\beta-j}\mathbb{Z} \text{ for } c \in \mathcal{O}_{\gamma, p^i} \text{ where } \gamma-\beta+j \ge i \geq j, \beta - \alpha < i
\text{ and } \beta - \alpha > j,\\
&=&p^{\alpha-k+i}\mathbb{Z} \text{ for } c \in \mathcal{O}_{\gamma, p^i} \text{ where } \gamma-\beta +j \ge i \geq j, \gamma -\alpha +k >i \text{ and }   j> \beta -\alpha+k,\\
&=&p^{\gamma}\mathbb{Z} \text{ for } c \in \mathcal{O}_{\gamma, p^i} \text{ where } \gamma-\beta+j \ge i \geq j, \gamma -\alpha +k \leq i \text{ and }   j> \beta -\alpha+k,\\
&=&p^{\beta + i-j}\mathbb{Z} \text{ for } c \in \mathcal{O}_{\gamma, p^i} \text{ where } \gamma-\beta+j\ge i \geq j \text{ and } \beta - \alpha < j\leq \beta -\alpha+k,\\
&=&p^{\beta }\mathbb{Z} \text{ for } c \in \mathcal{O}_{\gamma, p^i} \text{ where } i < j, \gamma-\beta+j \ge i >\beta -\alpha
\text{ and } j \leq \beta - \alpha +k,\\
&=&p^{\beta }\mathbb{Z} \text{ for } c \in \mathcal{O}_{\gamma, p^i} \text{ where } i < j, \gamma-\beta+j \ge i, \beta-\alpha+k >i >\beta -\alpha\text{ , } j > \beta - \alpha +k,\\
&=&p^{\alpha+i-k }\mathbb{Z} \text{ for } c \in \mathcal{O}_{\gamma, p^i} \text{ where } i < j , \gamma- \beta +j \ge i \text{ and } \gamma-\alpha+k > i \geq \beta -\alpha+k,\\
&=&p^{\gamma }\mathbb{Z} \text{ for } c \in \mathcal{O}_{\gamma, p^i} \text{ where } i < j, \gamma-\beta+j \ge i \geq \gamma -\alpha+k,\text{ and } j > \beta - \alpha +k,\\
&=&p^{\gamma}\mathbb{Z} \text{ for } c \in \mathcal{O}_{\gamma, p^i} \text{ where } \gamma - \beta +j \leq i \leq \gamma -1.\\\end{eqnarray*}

%\end{itemize}
\end{itemize}
\end{itemize}

 \end{thm}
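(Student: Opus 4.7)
The first observation is that $[a:G]$ depends only on the orbit of $a$: if $a = p^k u$ with $u$ a unit modulo $p^\alpha$, then $\Z a = p^k\Z$ inside $\Z/p^\alpha\Z$, so $[a:G]$ is determined by $k$ alone, and similarly for $b$ and $c$. I would therefore reduce to computing $[(p^k, p^j, p^i):G]$ for each admissible triple $(k, j, i) \in \{0, \ldots, \alpha\} \times \{0, \ldots, \beta\} \times \{0, \ldots, \gamma\}$, with the convention $p^s = 0$ in $\Z/p^s\Z$. Since $y \in [(a,b,c):G]$ is equivalent to $yG \subseteq \Z(a,b,c)$ and $G$ is generated by the three standard basis vectors $e_1, e_2, e_3$, the defining condition decouples into the scalar constraints $y\,e_s \in \Z(a,b,c)$ for $s = 1, 2, 3$.

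Next I would solve each scalar constraint. The condition $y\,e_1 = (y, 0, 0) \in \Z(a,b,c)$ asserts the existence of $n \in \Z$ with $n(a,b,c) \equiv (y, 0, 0)$, which forces $nb \equiv 0 \pmod{p^\beta}$ and $nc \equiv 0 \pmod{p^\gamma}$; hence $n \in p^{N_1}\Z$ with $N_1 = \max(\beta - j,\, \gamma - i)$ (a summand is suppressed whenever the corresponding coordinate vanishes). The remaining identity $y \equiv np^k \pmod{p^\alpha}$ then pins $y$ down to $p^{M_1}\Z$ with $M_1 = \min(k + N_1,\, \alpha)$. A completely symmetric analysis of $e_2$ and $e_3$ produces
\[
M_2 = \min\bigl(j + \max(\alpha - k,\, \gamma - i),\, \beta\bigr), \qquad M_3 = \min\bigl(i + \max(\alpha - k,\, \beta - j),\, \gamma\bigr),
\]
and intersecting the three divisibility constraints yields the closed form $[(a,b,c):G] = p^M\Z$ with $M = \max(M_1, M_2, M_3)$.

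With this formula in hand, the theorem reduces to evaluating $M$ in each listed case. For instance, when $a = b = 0$ and $c \in \mathcal{O}_{\gamma, p^i}$ with $\beta \leq i \leq \gamma - 1$, the three exponents are $M_1 = \alpha$, $M_2 = \beta$, $M_3 = i$, yielding $M = i$; for $0 \leq i \leq \beta - 1$ the third exponent is still $i$ but the maximum is attained at $\beta$. In the richest case, with $k < \alpha$ and $j < \beta$, the quantities $\alpha - k$ and $\beta - j$ determine which arm of $N_3$ is active, the comparisons of $\gamma - i$ against $\alpha - k$ and $\beta - j$ govern $N_1$ and $N_2$, and the thresholds where $i + N_3$ or $j + N_2$ reach $\gamma$ or $\beta$ control whether the outer $\min$'s saturate. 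Each subcase in the theorem is the output of one particular combination of these sign patterns; for example, the hypotheses $\gamma - \beta + j \geq i \geq \gamma - \alpha + k$, $j > \beta - \alpha + k$, and $i < j$ force $N_3 = \alpha - k$ together with $i + \alpha - k \geq \gamma$, driving $M_3 = \gamma$ while forcing $M_2 = \beta$ and $M_1 \leq \alpha$, so that $M = \gamma$ as advertised.

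The main obstacle is not conceptual but combinatorial: ensuring that the stated hypotheses on $i, j, k$ partition the admissible parameter region precisely into the listed branches, without overlap or omission. The critical transition thresholds I would track are $i = \beta - \alpha + k$ (where $N_3$ switches from $\beta - j$ to $\alpha - k$), $i = \gamma - \alpha + k$ and $i = \gamma - \beta + j$ (where the $\min$'s in $M_3$ and $M_2$ begin to saturate), and $j = \beta - \alpha + k$ (where the role of $j$ and $k$ inside $N_3$ swaps). Once the matching between the announced branches and these sign patterns is laid out, verifying each individual exponent is a one-line arithmetic check using the general formula $M = \max(M_1, M_2, M_3)$.
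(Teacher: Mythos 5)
Your reduction is sound and it gives a genuinely different, more unified proof than the paper's. The paper (see its Appendix) argues each branch separately with a two-sided containment: for the lower bound it exhibits, case by case, an explicit integer multiple showing $p^m\Z\subseteq[(a,b,c):G]$, and for the upper bound it tests $y$ against a single well-chosen vector, $(0,1,0)$ or $(0,0,1)$, and extracts a divisibility condition on the multiplier $n$. Your plan replaces this with one closed-form lemma: since $yG\subseteq\Z(a,b,c)$ decouples over the generators $e_1,e_2,e_3$, and each scalar constraint is solved exactly by the divisibility analysis you describe, one gets $[(a,b,c):G]=p^{M}\Z$ with $M=\max(M_1,M_2,M_3)$, $M_2=\min\bigl(j+\max(\alpha-k,\gamma-i),\beta\bigr)$, $M_3=\min\bigl(i+\max(\alpha-k,\beta-j),\gamma\bigr)$, and similarly for $M_1$ (which, as a side remark, never attains the maximum when $\alpha<\beta<\gamma$, since $M_1\leq\alpha$ while $\max(M_2,M_3)\geq\beta$); the zero-coordinate cases are handled uniformly by taking $k=\alpha$, $j=\beta$ or $i=\gamma$, so no ad hoc suppression is really needed. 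I checked your formula against a representative sample of the theorem's branches (including the $a=b=0$ cases, the $b=0$ case, and several of the mixed-index branches) and it reproduces the stated exponents, so the remaining work you flag — matching the announced inequalities on $i,j,k$ to the sign patterns of the thresholds $i=\beta-\alpha+k$, $i=\gamma-\alpha+k$, $i=\gamma-\beta+j$, $j=\beta-\alpha+k$ — is indeed mechanical, though it is real work you have deferred, and it is precisely where the paper's branch list is delicate (some listed conditions overlap or are vacuous, e.g. the $p^{\alpha+i-k}$ branch with $i<j$ is only populated when $j>\beta-\alpha+k$). What your route buys is significant: a single formula that certifies every case at once, is independent of the particular partition into branches, and generalizes verbatim to any number of cyclic factors, which would in fact answer the paper's Problem 1 for rank $r\geq4$; what the paper's route buys is only that each individual branch comes with a hands-on witness and requires no bookkeeping of minima and maxima.
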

 
Proof of the above Theorem 3 is relegated to the Appendix.\\
 
We conclude this section with the following open problem.\\

\begin{prob} Let $G_{\lambda, p} = \mathbb{Z}/p^{\lambda_1}\mathbb{Z} \oplus \mathbb{Z}/p^{\lambda_2}\mathbb{Z} \oplus \cdots \oplus \mathbb{Z}/p^{\lambda_r}\mathbb{Z}$ be a finite $p$-group of rank $r$. For $r\geq 4$, classify $(a_1, a_2, \cdots, a_r)$-annihilators of $G_{\lambda, p}$.
\end{prob}

\section{Group-annihilator graph as a threshold graph}
Threshold graphs play an important role in graph theory as well as in several applied areas which include psychology and computer science \cite{MP}. These graphs were introduced by Chv\'{a}tal and Hammer  and Henderson and Zalcstein [10] in 1977. These graphs have been rediscovered in different contexts and therefore leading to several equivalent definitions. In this section, our main objective is to show that the group-annihilator graph relaised by a group $\mathbb{Z}/p^{\alpha}\mathbb{Z}$ is a threshold graph. 

A vertex in a graph $\mathcal{G}$ is called \textit{{\bf dominating}} if it is adjacent to every other vertex of $\mathcal{G}$. A graph $\mathcal{G}$ is called a \textit{{\bf threshold graph}} if it is obtained by the following procedure:\\
Start with $K_1$, a single vertex, and use any of the following steps, in any order, an arbitrary number of times:\\
(i) Add an isolated vertex.\\
(ii) Add a dominating vertex, that is, add a new vertex and make it adjacent to each existing vertex.

An \textit{{\bf alternating 4-cycle}} of a graph $\mathcal{G} = (V, E)$ is a configuration consisting of distinct vertices $a, b, c, d$ such that $(a,b), (c,d) \in E$  and $(a,c), (b,d) \notin E$. By considering the presence or absence of edges $(a,d)$ and $(b,c)$, we see that the vertices of an alternating $4$-cycle induce a path $P_4$, a square $C_4$, or a matching $2K_2$.

Threshold graphs can be characterised in many different ways. One of the characterisation of a threshold graph ~\cite[Theorem 1.2.4]{MP} is presented in the following result.

\begin{thm}
For a graph $\mathcal{G} = (V, E ),$ the following are equivalent:\\
1. $\mathcal{G}$ is a threshold graph;\\
2. $\mathcal{G}$ does not have an alternating $4$-cycle.
\end{thm}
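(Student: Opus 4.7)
The plan is to prove both implications by induction.

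For $(1) \Rightarrow (2)$, I would induct on the number of construction steps producing $\mathcal{G}$. The base case $K_1$ has fewer than four vertices, so it vacuously contains no alternating 4-cycle. For the inductive step, suppose $\mathcal{G}$ is obtained from $\mathcal{G}'$ (which has no alternating 4-cycle by the inductive hypothesis) by adding a vertex $v$. In an alternating 4-cycle $a,b,c,d$ every vertex lies in exactly one of the edges $(a,b), (c,d)$ and in exactly one of the non-edges $(a,c), (b,d)$, so each participating vertex has at least one neighbor and at least one non-neighbor among the other three. Therefore an isolated $v$ cannot play any of the four roles, and neither can a dominating $v$. Since every alternating 4-cycle in $\mathcal{G}$ is either already present in $\mathcal{G}'$ or involves $v$, and both possibilities are excluded, $\mathcal{G}$ inherits the absence of alternating 4-cycles.

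For $(2) \Rightarrow (1)$, I would induct on $n = |V(\mathcal{G})|$. The crucial structural statement is the following lemma: \emph{if $\mathcal{G}$ has no alternating 4-cycle, then $\mathcal{G}$ has an isolated vertex or a dominating vertex.} Granted the lemma, removing such a vertex $v$ yields a smaller graph $\mathcal{G}'$ that still has no alternating 4-cycle (deletion cannot create a forbidden induced configuration), so $\mathcal{G}'$ is threshold by induction. Reattaching $v$ as an isolated or dominating vertex (as appropriate) exhibits $\mathcal{G}$ as threshold.

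The lemma itself I would derive from a \emph{nesting property}: for any two distinct $u, v \in V(\mathcal{G})$, either $N(u) \setminus \{v\} \subseteq N(v) \setminus \{u\}$ or $N(v) \setminus \{u\} \subseteq N(u) \setminus \{v\}$. Indeed, if both inclusions failed we could pick $x \in N(u) \setminus (N(v) \cup \{v\})$ and $y \in N(v) \setminus (N(u) \cup \{u\})$; one checks $x \neq y$, whereupon the four distinct vertices $u, x, y, v$ form an alternating 4-cycle with the labeling $(a,b,c,d) = (u,x,y,v)$, contradicting the hypothesis. Nesting linearly preorders vertices by $\deg(\cdot)$. Taking $v$ of minimum degree and $u$ of maximum degree, if $\deg(v)=0$ then $v$ is isolated; otherwise I would argue $u$ is dominating by assuming a non-neighbor $w$ and applying nesting to the pairs $\{u,w\}$ and $\{u, x\}$ for a hypothetical neighbor $x$ of $w$, forcing either an alternating 4-cycle or an isolated vertex, contrary to assumptions.

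The main obstacle is the closing case analysis in the lemma: the nesting step is immediate, but converting nesting into the clean dichotomy ``isolated vertex or dominating vertex'' requires care with the adjacency of the extremal vertices themselves (whether $u \sim v$, and whether a purported non-neighbor of $u$ has any neighbors at all). Everything else — the two inductions, the deletion/reattachment argument, and the observation that alternating 4-cycles in $\mathcal{G}$ must involve the newly added vertex — is routine bookkeeping once the lemma is established.
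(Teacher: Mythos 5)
Your proposal is correct, but note that the paper itself offers no proof of this statement at all: it is quoted as a known characterisation of threshold graphs, cited to Mahadev--Peled \cite[Theorem 1.2.4]{MP}, and everything in the paper that depends on it (Theorem 5 in particular) uses only the implication ``no alternating $4$-cycle $\Rightarrow$ threshold'' as a black box. Your argument is the standard self-contained route via nested neighbourhoods (the vicinal preorder): the direction $(1)\Rightarrow(2)$ by induction on the creation sequence is exactly right, since every vertex of an alternating $4$-cycle has both a neighbour and a non-neighbour among the other three, so a newly added isolated or dominating vertex cannot participate. For $(2)\Rightarrow(1)$, your nesting lemma is proved correctly, and the ``closing case analysis'' you worry about closes more easily than you fear: assuming no isolated vertex, let $u$ have maximum degree and suppose $u$ has a non-neighbour $w$; pick $x \in N(w)$ (so $x \neq u$ and $w \in N(x)\setminus\{u\}$ but $w \notin N(u)\setminus\{x\}$). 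Then the reverse inclusion $N(x)\setminus\{u\} \subseteq N(u)\setminus\{x\}$ fails, so nesting forces $N(u)\setminus\{x\} \subseteq N(x)\setminus\{u\}$, hence $\deg(u) \leq \deg(x)$; maximality gives $\deg(u)=\deg(x)$, so the two sets are equal, contradicting the membership of $w$ in one but not the other. Thus a single application of nesting to the pair $(u,x)$ suffices (the pair $(u,w)$ is not even needed), and the deletion/reattachment induction then completes the proof as you describe. In short: your proof is valid and fills in a citation the paper leaves to the literature.
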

In the subsequent result, we prove a very interesting property of a group-annihilator graph realised by the group $\Z/p^{\alpha}\Z$.

Recall that an \textit{independent part} (independent set) in a graph $\mathcal{G}$ is a set of vertices of $\mathcal{G}$ such that for very two vertices, there is no edge in the graph connecting the two. Also, the \textit{complete part} (complete subgraph) in a graph $\mathcal{G}$ is a set of vertices in $\mathcal{G}$ such that there is an edge between every pair of vertices.

\begin{thm}
For each integer $\alpha > 0$, the group annihilating graph $\Gamma(\Z/p^{\alpha}\Z)$ is a connected threshold graph. 
\end{thm}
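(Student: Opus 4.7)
The plan is to apply the characterization in Theorem 1: I will show that $\Gamma(\Z/p^{\alpha}\Z)$ is connected and contains no alternating $4$-cycle. Both facts follow at once from Observation~\ref{1}, which says that two vertices $a \in \mathcal{O}_{\alpha,p^i}$ and $b \in \mathcal{O}_{\alpha,p^j}$ are adjacent in $\Gamma(\Z/p^{\alpha}\Z)$ if and only if $i+j \geq \alpha$.

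For connectedness, observe that $0 \in \mathcal{O}_{\alpha,p^{\alpha}}$. For every other vertex $b \in \mathcal{O}_{\alpha,p^j}$ with $0 \leq j \leq \alpha-1$, the sum $\alpha + j \geq \alpha$, so $0$ is adjacent to $b$. Thus $0$ is a dominating vertex and $\Gamma(\Z/p^{\alpha}\Z)$ is connected with diameter at most $2$.

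For the threshold property, I would argue by contradiction. Suppose there is an alternating $4$-cycle on distinct vertices $a,b,c,d$ with $(a,b),(c,d) \in E$ and $(a,c),(b,d) \notin E$. Writing $a \in \mathcal{O}_{\alpha,p^{i_1}}$, $b \in \mathcal{O}_{\alpha,p^{i_2}}$, $c \in \mathcal{O}_{\alpha,p^{i_3}}$, $d \in \mathcal{O}_{\alpha,p^{i_4}}$, the adjacency rule from Observation~\ref{1} forces
\[
i_1 + i_2 \geq \alpha, \qquad i_3 + i_4 \geq \alpha, \qquad i_1 + i_3 < \alpha, \qquad i_2 + i_4 < \alpha.
\]
Summing the first two inequalities yields $i_1 + i_2 + i_3 + i_4 \geq 2\alpha$, while summing the last two yields the same sum strictly less than $2\alpha$, which is absurd. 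Hence $\Gamma(\Z/p^{\alpha}\Z)$ has no alternating $4$-cycle, and Theorem~1 of this section finishes the argument.

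The main (essentially the only) conceptual point is noticing that the adjacency rule $i+j \geq \alpha$ is symmetric and additive in exactly the way needed for the two pairs of inequalities on $\{i_1,i_2,i_3,i_4\}$ to contradict each other by simple summation; once this is recognised, the proof collapses to a one-line arithmetic check, and no case analysis on the orbits is required.
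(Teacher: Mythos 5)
Your proof is correct, and it takes a genuinely cleaner route than the paper's. The paper also reduces the claim to the absence of an alternating $4$-cycle, but it then splits the vertex set into an independent part $X$, a complete part $Y$, and $\{0\}$ (separately for $\alpha$ even and $\alpha$ odd) and runs through a long list of subcases according to how the four vertices $a,b,c,d$ are distributed among these parts, with the odd case left as ``similar.'' You instead observe that the adjacency rule of Observation~2.2, namely that $a\in\mathcal{O}_{\alpha,p^i}$ and $b\in\mathcal{O}_{\alpha,p^j}$ are adjacent iff $i+j\geq\alpha$, lets you sum the two edge conditions and the two non-edge conditions of a putative alternating $4$-cycle to get $i_1+i_2+i_3+i_4\geq 2\alpha$ and $i_1+i_2+i_3+i_4<2\alpha$ simultaneously; this kills all cases at once, with no parity distinction and no case analysis. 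Your argument is in fact the standard proof that any graph whose adjacency is given by ``vertex weights summing to at least a threshold'' is a threshold graph --- here the weight of a vertex is its orbit index $i$ and the threshold is $\alpha$ --- so what you have really noticed is that $\Gamma(\Z/p^{\alpha}\Z)$ satisfies the weight-and-threshold definition of a threshold graph directly. What the paper's longer route buys is the explicit identification of the independent and complete parts $X$ and $Y$, which it reuses later to write down the creation sequence and the eigenvalue multiplicities; your argument does not produce that structural decomposition, but as a proof of this theorem it is complete and preferable.
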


\begin{proof}
We have, $$V(\Gamma(\Z/p^{\alpha}\Z)) = \Z/p^{\alpha}\Z =   (\dot\bigcup_{i=0}^{\alpha } \mathcal{O}_{p^i}), $$ where $\mathcal{O}_{\alpha, p^i}$'s are the orbits of the element $p^i$ for $0 \leq i\leq \alpha -1$ and $\mathcal{O}_{\alpha,p^{\alpha}} = \{0\}$, since $(a,0) \in E(\Gamma(\Z/p^{\alpha}\Z))$ for each $a \in V(\Gamma(\Z/p^{\alpha}\Z))$, therefore the graph $\Gamma(\Z/p^{\alpha}\Z)$ is a connected graph for every $\alpha >0.$ So we need to prove that $\Gamma(\Z/p^{\alpha}\Z)$ is a threshold graph. In other word, it is enough to prove that there is no alternating $4 - cycle$ in $\Gamma(\Z/p^{\alpha}\Z).$

For groups $\Z/3\Z$ and $\Z/2\Z$,  $|V(\Gamma(\Z/p^{\alpha}\Z))|< 4 $ so the theorem is vacuously true.
Consider $\Gamma(\Z/p^{\alpha}\Z)$ for which $|V(\Gamma(\Z/p^{\alpha}\Z))| \geq 4 .$
\begin{itemize}

\item{{\bf When $\alpha $ is even i.e. $\alpha=2k$ for some $k \geq 1$}}\\

It is clear from definition of the group-annihilator graph that $X = \dot\bigcup_{i=0}^{k-1}\mathcal{O}_{\alpha, p^i}$ and $Y = \dot\bigcup_{j=0}^{k-1}\mathcal{O}_{\alpha, p^{k+j}}$ are independent and complete part of the graph $\Gamma(\Z/p^{\alpha}\Z),$ where each element of both $X$ and $Y$ are connected to $\mathcal{O}_{\alpha,p^{\alpha}} =\{0\}.$ 

Let $a,b,c,d$ be four distinct vertices of the graph $\Gamma(\Z/p^{\alpha}\Z).$ 

{\bf Case - 1 : }If $a \in \mathcal{O}_{\alpha,p^{\alpha}} $ i.e. $a = 0$ and $b,c,d \in X \dot\cup  Y,$ then $(0,b),(0,c) \text{ and }(0,c)$ are in $E(\Gamma(\Z/p^{\alpha}\Z))$. Since there are no two disjoint edges, hence there is no alternating $4$-cycle.\\

{\bf Case - 2 :} Suppose that all $a,b,c,d \in X \cup Y.$\\
{\bf Subcase - 1: } If $a,b,c,d \in Y$ then every two disjoint edges have diagonal edges. So we are done.\\
{\bf Subcase - 2: } Let $a \in X$ and $b,c,d \in Y.$ Clearly $(b,c),(c,d),(d,b)\text{ are in } E(\Gamma(\Z/p^{\alpha}\Z)).$
If there is no edge from $a$ to any of $b,c,d$ then there are no disjoint edges. Therefore, there is no alternating $4$-cycle.\\
Assume that there is an edge from $a$ to either of $b,c,d.$ Without loss of generality assume that $(a,b)$ is an edge. Then $(a,b)\text{ and }(c,d)$ are the two disjoint edges but there is a diagonal edge $(b,c)\in E(\Gamma(\Z/p^{\alpha}\Z))$ adjacent to  $(a,b)\text{ and }(c,d)$ as shown in figure 2 below.\\

 \begin{align*}
\begin{pgfpicture}{6.3cm}{5.8cm}{8cm}{3cm}
\pgfnodecircle{Node1}[fill]{\pgfxy(4,7)}{0.1cm}
\pgfnodecircle{Node2}[fill]{\pgfxy(8,7)}{0.1cm}
\pgfnodecircle{Node3}[fill]{\pgfxy(7, 5)}{0.1cm}
\pgfnodecircle{Node4}[fill]{\pgfxy(9,5)}{0.1cm}
\pgfnodeconnline{Node1}{Node2}
\pgfnodeconnline{Node2}{Node3}
\pgfnodeconnline{Node3}{Node4}
\pgfnodeconnline{Node2}{Node4}
\pgfputat{\pgfxy(3.8, 7.3)}{\pgfbox[left,center]{$a$}}
\pgfputat{\pgfxy(7.9, 7.3)}{\pgfbox[left,center]{$b$}}
\pgfputat{\pgfxy(6.7, 5.3)}{\pgfbox[left,center]{$d$}}
\pgfputat{h\pgfxy(9.1, 5.3)}{\pgfbox[left,center]{$c$}}
\end{pgfpicture}
\end{align*}

\vspace{.5cm}

$\hskip .9cm \hskip .9cm \hskip .9cm \hskip .9cm \hskip .9cm \hskip .9cm \hskip .9cm \hskip .9cm \hskip .7cm Figure ~2$

Therefore, there is no alternating $4$-cycle.\\

{\bf Subcase - 3: } Let $a,b \in X$ and $c,d \in Y.$ Assume $a \in \mathcal{O}_{\alpha,p^{i_1}}$, $b \in \mathcal{O}_{\alpha, p^{i_2}}$ for $0 \leq i_1 \leq i_2 \leq k-1$
and $c \in \mathcal{O}_{\alpha, p^{k+j_1}}$, $d \in \mathcal{O}_{\alpha, p^{k+j_2}}$ for $0 \leq j_1 \leq j_2 \leq k-1.$ Therefore, $(a,b) \notin E(\Gamma(\Z/p^{\alpha}\Z))$ and 
$(c,d)\in E(\Gamma(\Z/p^{\alpha}\Z)).$\\
If there is no edges from $a,b$ to any of the vertices $c,d$, then we are done.

If $(a,c) \in E(\Gamma(\Z/p^{\alpha}\Z))$ then by the construction $(a,d),(b,c),(b,d) \in E(\Gamma(\Z/p^{\alpha}\Z)).$ In this case also diagonal edges are adjacent to any two disjoint edges. Therefore the result follows.\\

If $(b,c) \in E(\Gamma(\Z/p^{\alpha}\Z))$ then by the construction $(b,d)
\in E(\Gamma(\Z/p^{\alpha}\Z)).$ In this case, there is an edge from $a$ to any of the vertices $c,d$, again the result follows as above.\\

{\bf Subcase - 4:} Let $a,b,c \in X$ and $d \in Y.$ Clearly there is no edges in between $a,b,c. $ So  irrespective of the fact that there is an edge from any of the vertices $a,b \text{ or } c$ to $d,$ there are no disjoint edges contained in the graph, so there is no alternating 4-cycle.\\

{\bf Subcase - 5:} If $a,b,c,d$ are all belong to $V(\Gamma(\Z/p^{\alpha}\Z)),$ then there is no edge between any two vertices of $X.$ So, there is no alternating $4$-cycle.\\

\item{{\bf When $\alpha $ is odd i.e. $\alpha=2k+1$ for some $k \geq 0$}}\\

Let $X = \dot\bigcup_{i=0}^{k}\mathcal{O}_{\alpha, p^i}$ and $Y = \dot \bigcup_{j=1}^{k}\mathcal{O}_{\alpha, p^{k+j}}$ be independent and complete part of the graph $\Gamma(\Z/p^{\alpha}\Z),$ where each element of both the parts $X$ and $Y$ are connected to $\mathcal{O}_{\alpha, p^{\alpha}} =\{0\}.$

Now by considering four distinct vertices $a,b,c,d$ and going by the same similar argument as above for $\alpha$-even, we can easily prove that $\Gamma(\Z/p^{\alpha}\Z)$ has no alternating $4$-cycle.
\end{itemize}

This completes the proof. 
 
\end{proof}
\subsection{Creation sequence of $\Gamma(\Z/p^{\alpha}\Z)$ and its eigen values}

We may represent a threshold graph on $n$ vertices using a binary sequence $(b_1, b_2, \cdots, b_n)$, where $b_i = 0$ if vertex $v_i$ is being added as an isolated vertex and $b_i =  1$ if $v_i$ is being added as a dominating vertex. In constructing an adjacency matrix, we order the vertices in the same way they are given in their creation sequence.

For $\alpha$ odd or even, we consider the group-annihilator graph $\Gamma(\mathbb{Z}/p^{\alpha}\mathbb{Z})$ and determine its creation sequence. Also we determine the multiplicities of eigen values $0$ and $-1.$

Let $G= \Z/p^{\alpha}\Z$ be a cyclic group of order $p^{\alpha}.$ Then,
$$\Z/p^{\alpha}\Z = \dot\bigcup_{i=0}^{i=\alpha-1}\mathcal{O}_{\alpha,p^i}$$
{\bf Case -I $\alpha = 2k$, $k\geq 1$ :} The subgraph associated with each $\mathcal{O}_{\alpha,p^i}$ for $k\leq i \leq 2k-1$ is a complete graph and the subgraph associated with each $\mathcal{O}_{\alpha,p^i}$ for $1\leq i \leq k-1$ is an independent graph. 

Therefore the creation sequence for $\Gamma(\Z/p^{\alpha}\Z)$ in this case is the following:
$$01^{|\mathcal{O}_{\alpha,p^k}|-1}0^{|\mathcal{O}_{\alpha,p^{k-1}}|} 1^{|\mathcal{O}_{\alpha,p^{k+1}}|}\ldots0^{|\mathcal{O}_{\alpha,p}|} 1^{|\mathcal{O}_{\alpha,p^{2k-1}}|} 0^{|\mathcal{O}_{\alpha,1}|} 1^{|\mathcal{O}_{\alpha,p^{2k}}|}.$$

Therefore, the multiplicity of eigen value $0$ is
$$
(|\mathcal{O}_{\alpha,p^{k-1}}|-1)+(|\mathcal{O}_{\alpha,p^{k-2}}|-1)+\ldots+(|\mathcal{O}_{\alpha,1}|-1)$$
$$=\sum_{i=0}^{k-1}\frac{p^{\alpha}}{p^i}(1-\frac{1}{p}) -k= p^{\alpha} - p^k -k,$$
and multiplicity of eigen value $-1$ is
$$ 1+
(|\mathcal{O}_{\alpha,p^{k}}|-2)+(|\mathcal{O}_{\alpha,p^{k+1}}|-1)+\ldots+(|\mathcal{O}_{\alpha,p^{2k-1}}|-1)
+(|\mathcal{O}_{\alpha,p^{2k}}|-1)$$
$$= \sum_{i=0}^{k-1}(|\mathcal{O}_{\alpha,p^{k+i}}|-1)
= \sum_{i=0}^{k-1} \frac{p^{\alpha}}{p^{k+i}}(1-\frac{1}{p})-k = p^k -k -1.$$

There will be remaining $p^{\alpha}-(p^{\alpha} - p^k -k) -(p^k -k -1) = 2k +1$ many eigen values.
Now consider the equitable partition matrix $\mathcal{M}$
in the following manner:\\
Let $V_i = \mathcal{O}_{\alpha, p^{\alpha+1-i}}$ for $1\leq i \leq 2k+1.$ Then equitable partition matrix $\mathcal{M} =(m_{ij}),$ where $$m_{ij}= |N(u)\cap V_j| \text{ for } u \in V_i,$$ where $N(u)$ is the number of adjacent vertices to $u.$ These remaining $2k+1$ eigen values are the roots of the characteristic polynomial of $\mathcal{M}.$

{\bf Case -II $\alpha = 2k-1$, $k\geq 1$ :} The subgraph associated with each $\mathcal{O}_{\alpha,p^i}$ for $k+1\leq i \leq 2k-1$ is a complete graph and the subgraph associated with each $\mathcal{O}_{\alpha,p^i}$ for $1\leq i \leq k$ is an independent graph. 

Therefore the creation sequence for $\Gamma(\Z/p^{\alpha}\Z)$ in this case is the following:
$$0^{|\mathcal{O}_{\alpha,p^{k-1}}|}1^{|\mathcal{O}_{\alpha,p^{k}}|} 0^{|\mathcal{O}_{\alpha,p^{k-2}}|}
1^{|\mathcal{O}_{\alpha,p^{k+1}}|} 
\ldots0^{|\mathcal{O}_{\alpha,p}|} 1^{|\mathcal{O}_{\alpha,p^{2k-2}}|} 0^{|\mathcal{O}_{\alpha,1}|} 1^{|\mathcal{O}_{\alpha,p^{2k-1}}|}.$$

Consider $p=2 \text{ and } k=1$ then creation sequence will become $01,$ therefore multiplicity of eigen values $0$ is $0$ and $-1$ is $1.$

In other cases, the multiplicity of eigen value $0$ is
$$
(|\mathcal{O}_{\alpha,p^{k-1}}|-1)+(|\mathcal{O}_{\alpha,p^{k-2}}|-1)+\ldots+(|\mathcal{O}_{\alpha,1}|-1)$$
$$=\sum_{i=0}^{k-1}\frac{p^{\alpha}}{p^i}(1-\frac{1}{p}) -k= p^{\alpha} - p^{k-1} -k,$$

and multiplicity of eigen value $-1$ is
$$ 
(|\mathcal{O}_{\alpha,p^{k}}|-1)+(|\mathcal{O}_{\alpha,p^{k+1}}|-1)+\ldots+(|\mathcal{O}_{\alpha,p^{2k-2}}|-1)
+(|\mathcal{O}_{\alpha,p^{2k-1}}|-1)$$
$$= \sum_{i=0}^{k-2}(|\mathcal{O}_{\alpha,p^{k+i}}|-1)
= \sum_{i=0}^{k-2} \frac{p^{\alpha}}{p^{k+i}}(1-\frac{1}{p})-k +1= p^{k-1} -k.$$

There will be remaining $p^{\alpha}-(p^{\alpha} - p^{k-1} -k) -(p^{k-1} -k ) = 2k $ many eigen values. Now consider the equitable partition matrix $\mathcal{M}$
in the following manner:\\
Let $V_i = \mathcal{O}_{\alpha, p^{\alpha+1-i}}$ for $1\leq i \leq 2k.$ Then equitable partition matrix $\mathcal{M} =(m_{ij}),$ where $$m_{ij}= |N(u)\cap V_j| \text{ for } u \in V_i,$$ where $N(u)$ is the number of adjacent vertices to $u.$ These remaining $2k+1$ eigen values are the roots of the characteristic polynomial of $\mathcal{M}.$

\subsection{Adjacency spectrum and energy of a group-annihilator graph}
\defn Let $\mathcal{G}$ be a graph with vertex set $V(\mathcal{G}) = \{v_1,\cdots,v_n\}$ and edge set $E(\mathcal{G})$. The adjacency matrix of $\mathcal{G},$ denoted by $\mathcal{A}(\mathcal{G})$, is the $n \times n$ matrix with entries $a_{ij} $ defined as follows: 
\begin{eqnarray*}
a_{ij}&=& 0 \text{ if } i=j,\\
&=& 0 \text{ if } i\neq j \text{ and } v_i \text{ is not adjacent to } v_j,\\
&=& 1 \text { if } i\neq j \text{ and } v_i \text{ is adjacent to } v_j.\\
\end{eqnarray*}
We often denote $\mathcal{A}(\mathcal{G})$ as $\mathcal{A}.$\\
Consider the cyclic group $G=\Z/p^r \Z$ of order $p^r$,
where $p$ is a prime and $r\in \mathbb{N}.$ Let $\Gamma(G)$ be the group-annihilator graph of $G.$ The vertex set of $\Gamma (G)$ is $V(\Gamma(G)) = \{1,\cdots,p^r\}$. If the rows and columns of an $p^r \times p^r$ matrix are indexed by $V(\Gamma(G)).$ Then the adjacency matrix of $\Gamma(G)$, denoted by  $\mathcal{A}(\Gamma(G))$, is the matrix with entries $a_{ij} $ defined as follows: 
\begin{eqnarray*}
a_{ij}&=& 0 \text{ if } i=j,\\
&=& 0 \text{ if } i\neq j \text{ and } i+j < r,\\
&=& 1 \text { if } i\neq j \text{ and } i+j \geq r.\\
\end{eqnarray*}

\begin{exm} Let $G = \mathbb{Z}/8\mathbb{Z}$. Then the adjacency matrix of group-annihilator graph realised by $G$ is following: 
\begin{equation*}
\mathcal{A}(\Gamma(G)) = 
\begin{pmatrix}
0 & 1 & 1 & 1 & 1 & 1 & 1 & 1 \\
1 & 0 & 0 & 0 & 0 & 0 & 0 & 0 \\
1 & 0 & 0 & 0 & 1 & 0 & 0 & 0 \\
1 & 0 & 0 & 0 & 0 & 0 & 0 & 0 \\
1 & 0 & 1 & 0 & 0 & 0 & 1 & 0 \\
1 & 0 & 0 & 0 & 0 & 0 & 0 & 0 \\
1 & 0 & 0 & 0 & 1 & 0 & 0 & 0 \\
1 & 0 & 0 & 0 & 0 & 0 & 0 & 0 \\
\end{pmatrix}
\end{equation*}
\end{exm}
The adjacency matrix $\mathcal{A}(\Gamma(G))$ of group-annihilator graph realised by $G=\Z/p^r \Z$ is a symmetric matrix for every $r\in \mathbb{N}. $ The set of solution of the characteristic polynomial $det (\lambda \mathcal{I}_{p^r} - \mathcal{A}(\Gamma(G))) = 0$ in $\mathbb{C}$ is known as adjacency spectrum of the group-annihilator graph. For $r=1,$ the group-annihilator graph is  {\it star graph}, so its adjacency spectrum is $\{\sqrt{p-1}, \underbrace{0,\ldots,0}_{p-2\textrm{\ times}},-\sqrt{p-1} \}.$ Moreover, for the $p$-group  $G^{*}_{\lambda, p} = \mathbb{Z}/p^{\lambda_1}\mathbb{Z} \oplus \mathbb{Z}/p^{\lambda_2}\mathbb{Z} \oplus \cdots \oplus \mathbb{Z}/p^{\lambda_r}\mathbb{Z}$, where $\lambda = \lambda_1 = \lambda_2 \cdots = \lambda_r$, the group-annihilator graph realised by $G^{*}_{\lambda, p}$ is a complete graph on $p^{r\lambda}$ vertices. The adjacency matrix $\mathcal{A}(\Gamma(G^{*}_{\lambda, p})) = J - I$, where $J$ is a matrix of order $p^{r\lambda} \times p^{r\lambda}$ where each entry is $1$ and $I$ is an identity matrix of order $p^{r\lambda} \times p^{r\lambda}$. The adjacency spectrum of $\mathcal{A}(\Gamma(G^{*}_{\lambda, p}))$ is $\{\underbrace{-1, -1, \cdots, -1}_{p^{r\lambda}-1\textrm{\ times}}, p^{r\lambda}-1\}$.  

Let $\mathcal{G}$ be a graph  with $n$ vertices and $\lambda_1, \cdots, \lambda_n$ be the $n$ eigen values of it. Then the {\bf energy} of $\mathcal{G}$ is defined as;
  $$E(\mathcal{G}) = \sum_{i=1}^n |\lambda_i|.$$
The energy of a group-annihilator graph realised by  $G=\Z/p \Z$ is $2\sqrt{p-1}$, whereas the energy of $\Gamma(G^{*}_{\lambda, p})$ is $2(p^{r\lambda} - 1)$. 
  
\defn: A graph $\mathcal{G}$ on $n$ vertices is said to be {\bf hyperenergetic graph} if $E(\mathcal{G}) > 2 (n-1) = E(K_n) =  $energy of a complete graph with n vertices. $\mathcal{G}$ is said to be {\bf hypoenergetic graph} if $E(\mathcal{G}) < n.$ 

In \cite{JTT}, authors have asserted that it is still of interest to find class of hyperenergtic graphs. In fact, they have shown that for $n > 8$ the threshold graphs $01^{\floor{\frac{n}{2}}-2}01^{\ceil{\frac{n}{2}}}$ are hyperenergetic. Hyperenergeticity was verified for other varieties of various classes of graphs: Paley, circulant, Kneser etc. The major blow to the research of hyperenergetic graphs was given by V. Nikiforv \cite{N} who proved the following very interesting result regarding hyperenergetic graphs.\\

\begin{thm}
For almost all graphs $\mathcal{G}$,

\begin{center} $E(\mathcal{G}) = \left(\frac{4}{3\pi} + o(1)\right)n^{\frac{3}{2}}$.
\end{center}
\end{thm}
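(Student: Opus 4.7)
The plan is to interpret ``almost all graphs'' in the standard way as the uniform distribution on labeled graphs with $n$ vertices, equivalently the Erd\H{o}s--R\'enyi model $G(n, 1/2)$, and to prove that the energy satisfies the stated asymptotic with probability tending to $1$ as $n \to \infty$. Writing the adjacency matrix of $\mathcal{G}$ as
$$A(\mathcal{G}) \;=\; \tfrac{1}{2}(J - I) + M,$$
where $J$ is the all-ones matrix and $I$ the identity, the first summand is a deterministic rank-two matrix while $M$ is a symmetric random matrix whose off-diagonal entries are independent and uniform on $\{-1/2, +1/2\}$ with zero diagonal. Thus $M$ is a Wigner matrix with entry variance $1/4$, and the energy $E(\mathcal{G}) = \sum_i |\lambda_i(A)|$ is essentially controlled by the spectrum of $M$.

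First I would invoke Wigner's semicircle law: the empirical spectral distribution of $M/\sqrt{n}$ converges weakly, almost surely, to the semicircular measure on $[-1, 1]$ with density $\rho(x) = \tfrac{2}{\pi}\sqrt{1 - x^2}$. Combined with the F\"uredi--Koml\'os estimate $\max_i |\mu_i(M)| = (1 + o(1))\sqrt{n}$ with high probability, this upgrades to convergence of the first absolute moment:
$$\frac{1}{n^{3/2}}\sum_{i=1}^{n} |\mu_i(M)| \;\longrightarrow\; \int_{-1}^{1} |x|\,\rho(x)\, dx \;=\; \frac{4}{\pi}\int_0^1 x\sqrt{1-x^2}\, dx \;=\; \frac{4}{3\pi}$$
in probability, the integral being a routine computation via $u = 1 - x^2$. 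Hence $\sum_i |\mu_i(M)| = \bigl(\tfrac{4}{3\pi} + o(1)\bigr)\, n^{3/2}$ with high probability.

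Second, I would control the effect of the rank-two perturbation $\tfrac{1}{2}(J - I)$, whose nonzero eigenvalues are $(n-1)/2$ (simple) and $-1/2$ (with multiplicity $n-1$). Weyl's inequalities imply that the eigenvalues of $A(\mathcal{G})$ and $M$ can be paired so that all but one pair differ by at most $1/2$, while the remaining pair differs by at most $n/2$. Consequently
$$\Bigl|\,\sum_i |\lambda_i(A)| \;-\; \sum_i |\mu_i(M)|\,\Bigr| \;=\; O(n),$$
which is $o(n^{3/2})$ and is therefore absorbed into the error term. Combining the two steps yields $E(\mathcal{G}) = \bigl(\tfrac{4}{3\pi} + o(1)\bigr)\, n^{3/2}$ for almost all graphs, as claimed.

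The main obstacle is the upgrade from weak convergence of empirical spectral distributions to convergence of the first absolute moment $\tfrac{1}{n}\sum_i |\mu_i(M)|$, since $|x|$ is continuous but unbounded on $\mathbb{R}$. This forces one to rule out eigenvalues straying far from the semicircle's support; a concentration-style F\"uredi--Koml\'os bound on the spectral radius of $M$, or equivalently a trace-moment estimate on $\mathrm{tr}(M^{2k})$ for $k$ growing slowly with $n$, supplies the needed uniform tail control and completes the argument.
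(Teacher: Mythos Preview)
The paper does not supply its own proof of this theorem: it is quoted verbatim from Nikiforov \cite{N} and used only as background motivation (to argue that searching for further hyperenergetic classes is ``pointless''). So there is nothing in the paper to compare your proposal against.

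That said, your sketch is essentially the standard proof and, in outline, is Nikiforov's argument: decompose $A(\mathcal{G})$ as a centred Wigner matrix plus a low-rank deterministic correction, apply the semicircle law to get the constant $\tfrac{4}{3\pi}$, and absorb the correction via a Schatten $1$-norm (trace-norm) bound. Two small remarks. First, the perturbation step can be phrased more cleanly than via pairwise Weyl inequalities: the triangle inequality for the trace norm gives directly
\[
\bigl| E(A) - E(M) \bigr| \;\le\; \bigl\| \tfrac{1}{2}(J-I) \bigr\|_{1} \;=\; \tfrac{n-1}{2} + (n-1)\cdot\tfrac{1}{2} \;=\; n-1 \;=\; o(n^{3/2}),
\]
which avoids any eigenvalue-pairing bookkeeping. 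Second, you correctly identify the only genuinely nontrivial step, namely upgrading weak convergence of the empirical spectral distribution to convergence of $\tfrac{1}{n}\sum_i |\mu_i(M)|/\sqrt{n}$; the F\"uredi--Koml\'os bound on the spectral radius (or an even-moment trace estimate) provides exactly the uniform integrability needed, and this is how the literature handles it.
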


The preceeding theorem immediately implies that almost all graphs are hyperenergetic. So, making further search for hyperenergetic graphs is pointless. However, it is interesting to find class of threshold graphs $\mathcal{G'}$ with $n$ vertices
for which the inequality $E(\mathcal{G}) < E(\mathcal{G^{'}}) < E(K_n)$ holds, where $\mathcal{G}$ is a graph with $n$ vertices.\\

\begin{thm}
For a prime $p \geq 7,$ there exists a connected threshold graph $\mathcal{G}$ with $p^2$ vertices such that $$ E(\Gamma(\Z/p^2\Z)) < E(\mathcal{G}) < E(K_{p^2}).$$
 \end{thm}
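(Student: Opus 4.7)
The plan is to exhibit an explicit connected threshold graph $\mathcal{G}$ on $p^2$ vertices whose energy falls strictly between $E(\Gamma(\Z/p^2\Z))$ and $E(K_{p^2})=2(p^2-1)$. My candidate is $\mathcal{G}=K_{p^2}-e$, the complete graph with one edge removed. This graph is connected for $p\ge 2$, and it is a threshold graph by the alternating $4$-cycle criterion: four distinct vertices $a,b,c,d$ cannot have both $ac$ and $bd$ non-edges, since $K_{p^2}-e$ has only one non-edge. Equivalently, $\mathcal{G}$ has creation sequence $0\,0\,1^{p^2-2}$: start with a vertex $v_1$, add a second vertex $u$ as isolated, then append $p^2-2$ dominating vertices.

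The first step is to compute $E(\mathcal{G})$ in closed form. Partition the vertex set into $U=\{v_1,u\}$ and $W$, the remaining $p^2-2$ vertices (which form $K_{p^2-2}$, each joined to both vertices of $U$). The adjacency matrix in block form produces three families of eigenvectors: the vector that is antisymmetric on $U$ and vanishes on $W$ gives eigenvalue $0$; vectors supported on $W$ with zero coordinate sum give eigenvalue $-1$ with multiplicity $p^2-3$; and vectors constant on each block yield two further eigenvalues, namely the roots of $\lambda^{2}-(p^2-3)\lambda-2(p^2-2)=0$. Summing absolute values gives
\[ E(K_{p^2}-e)=(p^2-3)+\sqrt{p^4+2p^2-7}. \]

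The upper estimate $E(K_{p^2}-e)<E(K_{p^2})$ is equivalent to $\sqrt{p^4+2p^2-7}<p^2+1$, which follows by squaring. For the lower estimate $E(\Gamma(\Z/p^2\Z))<E(K_{p^2}-e)$, I would apply the McClelland inequality $E(\mathcal{H})^{2}\le 2|E(\mathcal{H})|\cdot n$. By Observation~\ref{1}, the edges of $\Gamma(\Z/p^2\Z)$ are precisely the $p^2-1$ edges from $0$ to every other vertex together with the $\binom{p-1}{2}$ edges inside the clique on $\mathcal{O}_{2,p}$, so $2|E|=3p^2-3p$ and hence $E(\Gamma(\Z/p^2\Z))\le p\sqrt{3p(p-1)}$. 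Combining this with the crude bound $E(K_{p^2}-e)>2p^2-3$ (valid since $\sqrt{p^4+2p^2-7}>p^2$), the task reduces to the polynomial inequality $3p^4-3p^3<(2p^2-3)^{2}$, equivalently $p^4+3p^3-12p^2+9>0$, which is routine for all $p\ge 3$ and in particular for $p\ge 7$.

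The main obstacle is the looseness of the McClelland upper bound; the approach succeeds only because $\Gamma(\Z/p^2\Z)$ is extremely sparse (with $\Theta(p^2)$ edges) while $K_{p^2}-e$ is dense (with $\Theta(p^4)$ edges), so even crude estimates separate them by a large margin. If one wanted a candidate closer in energy to $\Gamma(\Z/p^2\Z)$, for example the threshold graph with creation sequence $0\,1^{p-1}\,0^{p^2-p-1}\,1$ obtained by enlarging the central clique of $\Gamma(\Z/p^2\Z)$ by one vertex, then the delicate step would be to control the three non-trivial eigenvalues coming from the $3\times 3$ equitable quotient matrix on the partition into orbits and to verify that the energy strictly increases after the modification; that analysis is conceptually harder and is why the far-apart choice $\mathcal{G}=K_{p^2}-e$ is preferable.
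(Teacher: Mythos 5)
Your proposal is correct, but it reaches the conclusion by a genuinely different route than the paper. The paper's proof stays inside the world of creation sequences: it bounds $E(\Gamma(\Z/p^2\Z))$ by $7p-2$ by isolating the three eigenvalues not equal to $0$ or $-1$ as roots of an explicit cubic $f(x)$ and localising them with the intermediate value theorem, then constructs a comparison threshold graph with creation sequence $01^{2m}0^{4m}1^{2m}$ (where $8m+1=p^2$) and repeats the same cubic/root-localisation analysis for it before sandwiching the two energies below $2(p^2-1)$. You instead take $\mathcal{G}=K_{p^2}-e$, whose threshold property is immediate (a single non-edge cannot support an alternating $4$-cycle, or via the sequence $0\,0\,1^{p^2-2}$) and whose spectrum you compute exactly, giving $E(\mathcal{G})=p^2-3+\sqrt{p^4+2p^2-7}$; the upper comparison with $E(K_{p^2})$ is then a one-line squaring, and for the lower comparison the crude McClelland bound $E(\Gamma(\Z/p^2\Z))\le p\sqrt{3p(p-1)}$ suffices because your $\mathcal{G}$ has energy close to $2p^2$. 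What each approach buys: your argument is shorter, needs no root localisation, and in fact works for every prime $p\ge 3$, not just $p\ge 7$; the paper's approach produces a more interesting intermediate example (its graph has energy of order $p^2/2$, far from the complete graph) and, as a by-product, the much sharper linear-order estimate $E(\Gamma(\Z/p^2\Z))\le 7p-2$, which your $\Theta(p^2)$ McClelland bound cannot deliver and which the paper also needs for the hypoenergeticity remark following the theorem. Indeed, had you chosen a sparse comparison graph like the one you sketch at the end (enlarging the central clique by one vertex), McClelland would no longer separate the energies and you would be forced into exactly the kind of equitable-quotient eigenvalue analysis the paper carries out; choosing the dense graph $K_{p^2}-e$ is what lets you avoid it while still satisfying the existence statement as literally formulated.
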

 
 \begin{proof}
The creation sequence of a group-annihilator graph realised by the group $\Z/p^2\Z$ is $01^{p-2}0^{p^2-p}1.$
Therefore, the multiplicity of eigen value $0$ is $p^2-p-1$ and the multiplicity of eigen value $-1$ is $p-2.$ There will be remaining $p^2 - (p^2-p-1)- (p-2)= 3$ eigen values. These three eigen values are the roots of the following polynomial, 
 $$f(x) =x^3 - (p-2)x^2 - (p^2-1)x + p(p-1)(p-2),$$
whereas the characteristic polynomial of a group-annihilator graph corresponding to $\Z/p^2\Z$ is  $x^{p^2-p-1}(x+1)^{p-2}f(x)$.
 
Now, $$f(0) = p (p-1)(p-2) > 0 \text { , }f(p)= - p (p-3) < 0, $$
 $$f(-p) = -p (p-1) \text{ and } f(2p)= p (3p^2 +5p +4) > 0.$$
If $\lambda_i$ for $i=1,2,3$ are roots of the polynomial $f(x)$ with $\lambda_1< \lambda_2 < \lambda_3$, then by Intermediate Value Property $|\lambda_i|\leq 2p.$ Therefore, $$E(\Gamma(\Z/p^2\Z))=(p-2) +
 \sum_{i=1}^3 |\lambda_i| \leq (p-2)+ 6p = 7p-2.$$ 
Let $\mathcal{G}$ be a connected threshold graph with creation sequence $ 01^{2m}0^{4m}1^{2m},$ where $8m+1= p^2.$\\
Therefore, the multiplicity of eigen value $0$ is $4m-1$ and the multiplicity of eigen value $-1$ is $4m-1.$ There will be remaining $8m+1 - 2(4m-1)= 3$ eigen values. These three eigen values are the roots of the following polynomial 
 $$g(x) =x^3 - (4m-1)x^2 - 4m(2m+1)x + 16m^3,$$
whereas the characteristic polynomial of graph $\mathcal{G}$ is  $x^{4m-1}(x+1)^{4m-1}g(x)$.
 
 Now, $$g(0) = 16m^3 > 0 \text { , } g(-\frac{5m}{2})= -\frac{m^2}{8}(37m -130) < 0, $$
 $$g(\frac{3m}{2}) = -\frac{m^2}{8}(13m+30) < 0\text{ and } g(\frac{13m}{2})=
  \frac{m^2}{8}(557m+130) > 0.$$
  If $\lambda_i'$ for $i=1,2,3$ roots of the polynomial $g(x)$ with $\lambda_1'< \lambda_2' < \lambda_3'$ then by Intermediate Value Property $|\lambda_1'|\leq \frac{5m}{2},|\lambda_2'|\leq \frac{3m}{2}, |\lambda_3'|\leq \frac{13m}{2} .$ Therefore, $$E(\Gamma(\Z/p^2\Z))\leq 7p-2
  < 4m-1 = \frac{p^2-1}{2}< (4m-1)+ \sum_{i=1}^3|\lambda_i'| = E(\mathcal{G}).$$ 
  Therefore, $$E(\mathcal{G})\leq (4m-1) +\frac{5m}{2}+\frac{3m}{2}+\frac{13m}{2} = (4m-1)+\frac{21m}{2}=14m+\frac{m}{2}-1< 16m = 2 (p^2-1)= E(K_{p^2}).$$

This completes the proof.

\end{proof}

It follows from the preceeding theorem that a group-annihilator graph realised by $\Z/p^2\Z$ is not hyperenergetic but hypoenergetic. We have the following conjecture regarding the class of hypoenergetic threshold graphs.

\begin{conj}
For any positive integer $\alpha,$ the group-annihilator graph realised by the group $\Z/p^{\alpha}\Z$ is not hyperenergetic but hypoenergetic.
 \end{conj}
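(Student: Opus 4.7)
The plan is to prove the stronger statement $E(\Gamma(\Z/p^{\alpha}\Z))<p^{\alpha}$ (hypoenergeticity); since $n<2(n-1)$ whenever $n\geq 3$, this automatically yields non-hyperenergeticity, and the tiny case $p=2,\alpha=1$, where $\Gamma(\Z/2\Z)=K_{2}$ has energy $2$, is handled directly.

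First I would exploit the spectral decomposition already worked out in Section 4.1. Writing $\alpha=2k$ (the case $\alpha=2k-1$ is parallel), the spectrum of $\mathcal{A}(\Gamma(\Z/p^{\alpha}\Z))$ consists of the eigenvalue $0$ with multiplicity $p^{\alpha}-p^{k}-k$, the eigenvalue $-1$ with multiplicity $p^{k}-k-1$, and $2k+1$ further eigenvalues $\mu_{1},\dots,\mu_{2k+1}$ that arise as the roots of the characteristic polynomial of the equitable quotient matrix $\mathcal{M}$ on the orbit partition. Consequently
\[
E(\Gamma(\Z/p^{\alpha}\Z)) \;=\; (p^{k}-k-1) + \sum_{i}|\mu_{i}|,
\]
so the task reduces to bounding $\sum_{i}|\mu_{i}|$.

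To do this I would combine Cauchy--Schwarz with an exact edge count. By Observation~\ref{1}, a vertex $v\in\mathcal{O}_{\alpha,p^{i}}$ is adjacent precisely to the elements of the subgroup $p^{\alpha-i}\Z/p^{\alpha}\Z$ other than $v$ itself, so $\deg(v)=p^{i}$ if $2i<\alpha$ and $\deg(v)=p^{i}-1$ if $2i\geq\alpha$. Combined with $|\mathcal{O}_{\alpha,p^{i}}|=p^{\alpha-i-1}(p-1)$, a short summation yields
\[
2|E(\Gamma(\Z/p^{\alpha}\Z))| \;=\; \alpha(p-1)p^{\alpha-1}+p^{\lfloor\alpha/2\rfloor}\bigl(p^{\lceil\alpha/2\rceil}-1\bigr).
\]
The identity $\sum\lambda_{i}^{2}=2|E|$, together with the above multiplicities, then determines $\sum_{i}\mu_{i}^{2}$ exactly. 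Applying Cauchy--Schwarz to the at most $\alpha+1$ unknown eigenvalues produces an estimate of the form $\sum_{i}|\mu_{i}|\leq C\sqrt{\alpha}\,p^{\alpha/2}$ for an absolute constant $C$, whence $E(\Gamma(\Z/p^{\alpha}\Z))=O(\sqrt{\alpha}\,p^{\alpha/2})$, which is strictly less than $p^{\alpha}$ for all but a finite list of pairs $(p,\alpha)$.

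The main obstacle is this residual list of small pairs, typically $(p,\alpha)\in\{(2,2),(2,3),(2,4),(2,5),(2,6),(3,2),(3,3)\}$, on which Cauchy--Schwarz is too lossy. For each such pair I would imitate the argument used for $\alpha=2$ in Section 4.2: write the characteristic polynomial of $\mathcal{M}$ explicitly, cage each of its roots between integer points of the form $\pm c\,p^{\lceil\alpha/2\rceil}$ via the Intermediate Value Theorem, and sum. A cleaner route that avoids the finite case analysis entirely would be to tighten the Cauchy--Schwarz step by enforcing the trace relation $\sum_{i}\mu_{i}=p^{k}-k-1$ through a Lagrange multiplier, which should absorb a factor of roughly $\sqrt{\alpha}$ and thereby cover every pair $(p,\alpha)$ in one stroke. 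Either route closes the argument and establishes the conjecture.
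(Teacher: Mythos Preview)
The paper does not prove this statement: it is explicitly labelled a \emph{conjecture}, and the only supporting evidence is the case $\alpha=2$ with $p\geq 7$ handled in the theorem immediately preceding it. There is therefore no ``paper's own proof'' to compare against; you are attempting to settle what the authors left open.

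As an attack on the conjecture your outline is reasonable, but several points need repair. First, the Cauchy--Schwarz estimate is miscomputed. Since $2|E|=\alpha(p-1)p^{\alpha-1}+p^{\alpha}-p^{\lceil\alpha/2\rceil}$ is of order $\alpha p^{\alpha}$, one has $\sum_i\mu_i^{2}=2|E|-m_{-1}=\Theta(\alpha p^{\alpha})$, and Cauchy--Schwarz over the $\alpha+1$ quotient eigenvalues yields $\sum_i|\mu_i|\leq C\,\alpha\,p^{\alpha/2}$, not $C\sqrt{\alpha}\,p^{\alpha/2}$. The conclusion $E(\Gamma)\leq(\alpha+2)p^{\alpha/2}<p^{\alpha}$ still holds whenever $\alpha+2<p^{\alpha/2}$, so the architecture survives, but the residual list of exceptional pairs is a bit longer than you indicate (for $p=2$ one needs roughly $\alpha\leq 7$). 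Second, the ``cleaner route'' does not work as claimed: maximising $\sum|\mu_i|$ over $\alpha+1$ reals subject to the two constraints $\sum\mu_i=S$ and $\sum\mu_i^{2}=Q$ gives a maximum of order $\sqrt{(\alpha+1)Q-S^{2}}$, and here $S^{2}\approx p^{\alpha}$ while $(\alpha+1)Q\approx\alpha^{2}p^{\alpha}$, so the trace relation shaves off only a lower-order additive term and does not recover the missing factor of $\sqrt{\alpha}$. You will genuinely need the finite case check. Third, note the boundary case $p=2,\alpha=1$: there $\Gamma(\Z/2\Z)=K_{2}$ has energy $2=n=2(n-1)$, so it is neither hyperenergetic nor (strictly) hypoenergetic, and the conjecture as literally stated fails at that point; any proof must either exclude it or relax ``hypoenergetic'' to $E\leq n$.
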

 
\section{Laplacian eigen values of  $\bf {\Gamma (\mathbb{Z}/p^{\alpha}\mathbb{Z})}$}

The Laplacian matrix of a graph $\mathcal{G}$ is $L(\mathcal{G}) = D(\mathcal{G}) - \mathcal{A}(\mathcal{G})$, where $\mathcal{A}(\mathcal{G})$ is the adjacency matrix of $\mathcal{G}$ and $D(\mathcal{G})$ is the diagonal matrix with diagonal entries as vertex degrees of $\mathcal{G}$. Laplacian matrices are well-studied in the field of spectral graph theory. See \cite{FC, RM} for more information on the Laplacian matrix. The eigen values of $L(\mathcal{G})$ are known as Laplacian eigen values. The main purpose of this section is to study Laplacian eigen values of the graph $\mathcal{G}$ realised by the group $G = \mathbb{Z}/p^{\alpha}\mathbb{Z}$. We use the concept of conjugate sequence of non-negative integers to determine that Laplacian eigen values of the graph $\Gamma(\mathbb{Z}/p^{\alpha}\mathbb{Z})$ are representatives $0, 1, p, p^2, \cdots, p^{\alpha - 1}$ (with multiplicities) of orbits $\{\mathcal{O}_{\alpha, p^{\alpha}}\} \cup \{\mathcal{O}_{\alpha, p^{i}} : 0\leq i \leq \alpha - 1\}$.

\begin{defn} The \textit{degree sequence} of a graph $\mathcal{G}$ is given by $\pi(\mathcal{G}) = (d_1, d_2, \cdots, d_n)$, which is the non-increasing sequence of non-zero degrees of vertices of $\mathcal{G}$.
\end{defn}
The degree sequence is a graph invariant, so two isomorphic graphs have the same degree sequence. In general, the degree sequence does not uniquely determine a graph, that is, two non-isomorphic
graphs can have the same degree sequence. However, for threshold graphs, we have the following result.

\begin{prop}\cite{RMB} Let $\mathcal{G}_1$ and $\mathcal{G}_2$ be two threshold graphs and let $\pi_1(\mathcal{G}_1)$ and $\pi_{2}(\mathcal{G}_2)$ be the degree sequences of $\mathcal{G}_1$ and $\mathcal{G}_2$, respectively. If $\pi_1(\mathcal{G}_1) = \pi_{2}(\mathcal{G}_2)$, then $\mathcal{G}_1 \cong \mathcal{G}_2$.
\end{prop}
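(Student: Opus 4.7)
The plan is to prove the statement by induction on the number $n$ of vertices in $\mathcal{G}_1$ (which equals that of $\mathcal{G}_2$, since they share a degree sequence). The base case $n=1$ is immediate. The engine of the inductive step is the observation that any threshold graph $\mathcal{G}$ with $n \geq 2$ vertices contains either an isolated vertex (of degree $0$) or a dominating vertex (of degree $n-1$); moreover, for $n \geq 2$ exactly one of these holds, since a dominating vertex must be adjacent to every other vertex and hence cannot coexist with an isolated one. Which of the two cases occurs can be read off directly from the shared degree sequence $\pi = (d_1 \geq d_2 \geq \cdots \geq d_n)$: either $d_n = 0$ or $d_1 = n-1$.

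To justify the existence claim, I would fix any creation sequence $(b_1, b_2, \ldots, b_n)$ realising $\mathcal{G}$. The last vertex added is not touched by any subsequent step of the construction, so in the final graph it has degree $0$ if $b_n = 0$ and degree $n-1$ if $b_n = 1$. This is the conceptual heart of the argument; the rest is bookkeeping with degree sequences.

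Given this dichotomy, the inductive step splits into two symmetric cases. Suppose first that $d_n = 0$ and let $v \in V(\mathcal{G}_1)$, $w \in V(\mathcal{G}_2)$ be isolated vertices. Because the class of threshold graphs is closed under vertex deletion (visible either from the alternating $4$-cycle characterisation recalled earlier in the paper or from the creation-sequence viewpoint), the graphs $\mathcal{G}_1 - v$ and $\mathcal{G}_2 - w$ are threshold, and each has degree sequence $(d_1, d_2, \ldots, d_{n-1})$, since deleting an isolated vertex leaves every other degree unchanged. By the inductive hypothesis there is an isomorphism $\phi \colon \mathcal{G}_1 - v \to \mathcal{G}_2 - w$, which extends to an isomorphism $\mathcal{G}_1 \to \mathcal{G}_2$ by setting $\phi(v) = w$. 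The case $d_1 = n-1$ is handled identically using dominating vertices; removing such a vertex lowers every remaining degree by $1$, so the truncated sequences still agree at $(d_2 - 1, d_3 - 1, \ldots, d_n - 1)$, and the inductive hypothesis again applies, extended by $v \mapsto w$.

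The main obstacle is precisely the dichotomy used at the start, namely that every threshold graph on $n \geq 2$ vertices has an isolated or a dominating vertex; without it the degree sequence provides no canonical handle for peeling the graphs inductively, and the analogous statement fails for arbitrary graphs. An alternative strategy would be to extract the full creation sequence from $\pi$ in one pass by iterating the above peeling and then invoke the fact that two threshold graphs built from the same creation sequence (in a canonical labelling) are isomorphic, but the inductive argument above is more economical and avoids choosing a canonical labelling explicitly.
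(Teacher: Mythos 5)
Your proof is correct and complete: the induction that peels off an isolated or a dominating vertex — whose existence you correctly extract from the last entry of a creation sequence, and whose presence both graphs share because it is visible in the common degree sequence ($d_n=0$ or $d_1=n-1$) — is the standard argument for this fact, and the hereditary nature of thresholdness plus the bookkeeping of degrees after deletion is handled correctly in both cases. The paper itself gives no proof (the proposition is quoted from Merris \cite{RMB}), so your argument simply supplies the missing textbook route. One small point of reconciliation with the paper's conventions: the paper defines the degree sequence as the non-increasing sequence of \emph{non-zero} degrees, under which the statement as literally written would fail (e.g.\ $K_2$ versus $K_2$ together with an isolated vertex have the same non-zero degrees); your reading, with the full degree sequence on a fixed number of vertices, is the correct interpretation of the cited result and is what your induction actually uses.
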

The Laplacian spectra of threshold graphs have been studied in \cite{HK, RB}. In \cite{HK}, the formulas for the Laplacian spectrum, the Laplacian polynomial, and the number of spanning trees of a threshold graph are given. It is shown that the degree sequence of a threshold graph and the sequence of eigenvalues of its Laplacian matrix are \enquote {almost the same} and on this basis, formulas are given to express the Laplacian polynomial and the number of spanning trees of a threshold graph in terms of its degree sequence.

Let $x = [x_1, x_2, \cdots, x_n]$ be a sequence of non-negative integers arranged in non-increasing order, which we refer to as a \textit{partition}. Define the transpose of the partition as $x^* = [x_1^{*}, x_2^{*}, \cdots, x_m^{*}]$, where $x_j^{*} = |\{x_i : x_i \geq j\}|$, $j = 1, 2,\cdots, m$. Therefore $x_j^{*}$ is the number of $x_i$'s that are greater than equal to $j$. Recall from \cite{RB} that a sequence $x^*$ is called the conjugate sequence of $x$. The another interpretation of a conjugate sequence is the \textit{Ferrer's diagram} corresponding to $x_1, x_2, \cdots, x_n$ consists of $n$ left justified rows of boxes, where the $i^{th}$ row consists of $x_i$ boxes (blocks), $i = 1, 2, \cdots, n$. Note that $x_i^{*}$ is the number of boxes in the $i^{th}$ column of the Ferrer's diagram with $i = 1, 2, \cdots, n$. An immediate consequence of this observation is that if $x^*$ is the conjugate sequence of $x$, then;
\begin{equation*}
\sum\limits_{i = 1}^{n} x_i = \sum\limits_{i = 1}^{m} x_i^{*}
\end{equation*}

If $x$ represents the degree sequence of a graph, then the number of boxes in the $i^{th}$ row of the Ferrers diagram is the degree of vertex $i$, while the number of boxes in the $i^{th}$ row of the Ferrers diagram of the transpose is the number of vertices with degree at least $i$. The trace of a partition $x$ is $tr(x) = |\{i : x_i\geq i\}|$, which is the length of \enquote {diagonal} of the Ferrer's diagram for $x$ (or $x^*$).

In the following result, we show that Laplacian eigen values of the group-annihilator graph $\Gamma(G)$ enjoy an interesting property.

\begin{thm} \cite{RB}
Let $\mathcal{G}$ be a threshold graph with $V(\mathcal{G}) = \{1, 2, \cdots, n\}$, $L(\mathcal{G})$ be the Laplacian and $[d_1, d_2, \cdots, d_n]$ the degree sequence of $\mathcal{G}$. Then $[d_1^*, d_2^*, \cdots, d_n^*]$ are the eigenvalues of $L(\mathcal{G})$.
\end{thm}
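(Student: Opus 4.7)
The plan is to proceed by induction on the number of vertices $n$, exploiting the creation-sequence description of threshold graphs: every threshold graph on $n$ vertices is obtained from a threshold graph on $n-1$ vertices by adjoining either an isolated vertex or a dominating vertex. Throughout, I interpret the conjugate sequence $[d_1^*, d_2^*, \ldots, d_n^*]$ as padded with trailing zeros to length $n$, so that it has the same number of entries as $L(\mathcal{G})$ has eigenvalues (counted with multiplicity). The base case $n = 1$ is immediate: $L(\mathcal{G})$ is the $1 \times 1$ zero matrix and the padded conjugate is $[0]$.

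For the inductive step, assume the conclusion for a threshold graph $\mathcal{G}$ on $n-1$ vertices and let $\mathcal{G}'$ arise by a single creation step. In the isolated-vertex case, $L(\mathcal{G}')$ is block diagonal, $L(\mathcal{G}) \oplus [0]$, so the spectrum simply gains one extra $0$; meanwhile the new degree sequence is $(d_1, \ldots, d_{n-1}, 0)$, each $d_j^*$ is unchanged (since the new vertex contributes to no count), and padding to length $n$ appends exactly one $0$. The two sides match term by term.

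In the dominating-vertex case, write
$$L(\mathcal{G}') = \begin{pmatrix} L(\mathcal{G}) + I_{n-1} & -\mathbf{1} \\ -\mathbf{1}^{T} & n-1 \end{pmatrix}.$$
Any eigenvector $v$ of $L(\mathcal{G})$ with nonzero eigenvalue $\mu$ satisfies $\mathbf{1}^{T}v = 0$, so $(v, 0)^{T}$ is an eigenvector of $L(\mathcal{G}')$ with eigenvalue $\mu + 1$. The all-ones vector on $n$ coordinates lies in the kernel, and a brief computation shows that $(\mathbf{1}_{n-1},\, -(n-1))^{T}$ is an eigenvector with eigenvalue $n$, thereby exhausting all $n$ eigenvalues. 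Invoking the inductive hypothesis $\mu_i = d_i^*$, the new spectrum is $[n,\, 1 + d_1^*,\, \ldots,\, 1 + d_{n-2}^*,\, 0]$. A direct count on the new degree sequence $(n-1, d_1 + 1, \ldots, d_{n-1} + 1)$ produces the same list: position $1$ equals $n$ because every vertex now has positive degree, position $n$ equals $0$ because no degree reaches $n$, and for $2 \leq j \leq n-1$ the entry equals $1 + d_{j-1}^*$ (the $+1$ from the new dominating vertex, and $d_{j-1}^*$ from the uniformly shifted old degrees).

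The principal obstacle is the dominating-vertex step, where one must verify in tandem that the Laplacian update shifts only the nonzero part of the old spectrum by $+1$ while inserting $n$ at the top and keeping $0$ at the bottom, and that the conjugate-sequence update performs exactly the same arithmetic slot by slot. A cleaner alternative would route through the identity $L(\mathcal{G}) + L(\overline{\mathcal{G}}) = nI - J$ together with closure of threshold graphs under complementation, trading the dominating case for the isolated one; however, the direct block-matrix route above is self-contained and sets the creation sequence against the Ferrers-diagram conjugation transparently, closing the induction.
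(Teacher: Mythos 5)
You should first note that the paper does not actually prove this statement: it is quoted from Bapat \cite{RB} as a known result and used as a black box, so your induction on the creation sequence is supplying a proof where the paper offers only a citation. Your route is the standard one for this theorem (isolated vertex: append a $0$ both to the spectrum and to the padded conjugate; dominating vertex: shift the spectrum and the conjugate in lockstep), and the conjugate-sequence bookkeeping in the dominating case, $(d')_1^*=n$, $(d')_j^*=1+d_{j-1}^*$ for $2\le j\le n-1$, $(d')_n^*=0$, is carried out correctly.

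There is, however, one step that fails as written. In the dominating-vertex case you only produce eigenvectors $(v,0)^T$ for eigenvectors $v$ of $L(\mathcal{G})$ with \emph{nonzero} eigenvalue, and then claim that these together with the all-ones vector and $(\mathbf{1}_{n-1},-(n-1))^T$ exhaust all $n$ eigenvalues. That count is correct only when $\mathcal{G}$ is connected. A threshold graph built with isolated-vertex steps is typically disconnected, so $\ker L(\mathcal{G})$ has dimension $c>1$, your list contains only $n+1-c$ vectors, and the asserted new spectrum $[n,\,1+d_1^*,\ldots,1+d_{n-2}^*,\,0]$ silently shifts the $c-1$ extra zero eigenvalues to $1$ without justification (note these correspond exactly to the trailing zeros of the padded conjugate, which your degree-sequence computation does shift to $1$). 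The repair is immediate: the block computation uses only $\mathbf{1}^T v=0$, not $\mu\neq 0$. Take an orthonormal eigenbasis of the symmetric matrix $L(\mathcal{G})$ containing the normalized all-ones vector; the other $n-2$ basis vectors are orthogonal to $\mathbf{1}$, so each yields an eigenvector $(v,0)^T$ of $L(\mathcal{G}')$ with eigenvalue $\mu+1$ (kernel vectors orthogonal to $\mathbf{1}$ giving eigenvalue $1$), and together with $\mathbf{1}_n$ and $(\mathbf{1}_{n-1},-(n-1))^T$, which are orthogonal to these and to each other, you obtain $n$ independent eigenvectors and exactly the list you claimed. With that one-line change the induction closes and the proof is correct.
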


\begin{thm}\label{thm lap} Let $p$ be a prime number and $\alpha \in \mathbb{N}.$
Let $\Gamma(G)$ be a group-annihilator graph realised by the group $G = \mathbb{Z}/p^{\alpha}\mathbb{Z}$ and let $L(\Gamma(G))$ be the Laplacian. Then $[p^{\alpha}, p^{\alpha - 1}, \cdots, 1]$ are the eigen values of $L(\Gamma(G))$.
\end{thm}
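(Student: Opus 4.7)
The plan is to reduce the spectral computation to a combinatorial one by invoking two results already established in the paper. Since $\Gamma(\mathbb{Z}/p^\alpha\mathbb{Z})$ is a threshold graph by the main theorem of Section 4, I can apply the cited result of Merris: the Laplacian eigenvalues of any threshold graph coincide with the conjugate $[d_1^*, d_2^*, \ldots, d_n^*]$ of its non-increasing degree sequence. It therefore suffices to compute the degree sequence of $\Gamma(G)$ and take its conjugate.

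First, I would determine all vertex degrees. By Observation \ref{1}, $a \in \mathcal{O}_{\alpha, p^i}$ and $b \in \mathcal{O}_{\alpha, p^j}$ are adjacent iff $i + j \geq \alpha$, so for a non-zero $a$ with $0 \leq i \leq \alpha - 1$, the degree equals $|\mathcal{O}_{\alpha, p^\alpha}| + \sum_{j = \alpha - i}^{\alpha - 1} |\mathcal{O}_{\alpha, p^j}|$, with a further subtraction of $1$ when $2i \geq \alpha$ (to account for $a$ not being adjacent to itself). Plugging in $|\mathcal{O}_{\alpha, p^k}| = p^{\alpha - k - 1}(p-1)$ for $k < \alpha$ and $|\mathcal{O}_{\alpha, p^\alpha}| = 1$, and using the geometric identity
\[
1 + (p-1)(1 + p + \cdots + p^{i-1}) = p^i,
\]
the sum collapses to $\deg(a) = p^i$ when $2i < \alpha$ and $\deg(a) = p^i - 1$ when $2i \geq \alpha$, while the vertex $0$ itself has degree $p^\alpha - 1$.

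Next, I would compute the conjugate $d_j^* = |\{v : \deg(v) \geq j\}|$. The crucial identity is
\[
|\mathcal{O}_{\alpha, p^\alpha}| + \sum_{i' = i}^{\alpha - 1} |\mathcal{O}_{\alpha, p^{i'}}| = p^{\alpha - i},
\]
again by telescoping. This shows that for each $i \in \{0, 1, \ldots, \alpha\}$ the value $p^{\alpha - i}$ appears as $d_j^*$ for appropriate $j$. By tracking the lengths of the resulting plateaus in $(d_j^*)_j$ as $j$ crosses each distinct degree value in turn, I would conclude that the conjugate sequence consists precisely of the powers $p^\alpha, p^{\alpha - 1}, \ldots, p, 1$ together with a terminal $0$, each with a multiplicity determined by the spacing between successive degree values, which is the desired list.

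The main obstacle lies in the careful bookkeeping of these plateau lengths, which depends on distinguishing the ``low'' regime $2i < \alpha$ (where $\deg(a) = p^i$) from the ``high'' regime $2i \geq \alpha$ (where $\deg(a) = p^i - 1$). The shift by $1$ in the high regime moves some jumps in $d_j^*$ by one position relative to the low regime, and one must verify that no conjugate value outside $\{0, 1, p, p^2, \ldots, p^\alpha\}$ ever appears, paying particular attention to the boundary $i = \lceil \alpha/2 \rceil$ and to small edge cases (e.g., $p = 2$, $\alpha = 2$) where distinct degree values $p^i$ and $p^{i'} - 1$ can coincide and thereby suppress certain eigenvalues to multiplicity zero.
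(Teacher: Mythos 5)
Your proposal follows essentially the same route as the paper: both invoke the threshold property of $\Gamma(\mathbb{Z}/p^{\alpha}\mathbb{Z})$ together with the cited result that the Laplacian spectrum of a threshold graph is the conjugate of its degree sequence, then compute the degrees orbit-by-orbit (getting $p^{i}$ in the independent part and $p^{i}-1$ in the complete part, with $0$ of degree $p^{\alpha}-1$) and conjugate via the Ferrer's diagram, using the same telescoping identity $1+\sum_{j\geq \alpha-i}|\mathcal{O}_{\alpha,p^{j}}|=p^{i}$. If anything, your attention to the regime boundary and to degenerate cases such as $p=2,\ \alpha=2$ (where a claimed eigenvalue can drop out) is more careful than the paper's treatment, which only writes out the even-$\alpha$ case and does not flag these coincidences.
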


\begin{proof} We consider the following cases:

\textbf{Case - I : $\alpha = 2k$ for $k\ge 1:$}

For the symmetric action, $Aut(\Gamma(G)) \times G \longrightarrow G$,

the number of distinct elements in the orbit represented by an element $1$ are $$\phi(p^{\alpha}) = p^{\alpha} - p^{\alpha - 1} = |\mathcal{O}_{\alpha,1}|,$$ 

the number of distinct elements in the orbit represented by an element $p$ are $${\frac{\phi(p^{\alpha})}{p}} = p^{\alpha - 1} - p^{\alpha - 2} = |\mathcal{O}_{\alpha,p}|,$$
\hskip .9cm \hskip .9cm \hskip .9cm \hskip .9cm \hskip .9cm\hskip .9cm \hskip .9cm \hskip .9cm \vdots

the number of distinct elements in the orbit represented by an element $p^{\alpha - 1}$ are $${\frac{\phi(p^{\alpha})}{p^{\alpha - 1}}} = p - 1 =  |\mathcal{O}_{\alpha,p^{\alpha - 1}}|.$$

By definition, vertices of the graph $\Gamma(G)$ are labelled by elements of the group $G$, so there is a unique map, $\gamma: V(\Gamma(G)) \longrightarrow G$, such that for some $v \in V(\Gamma(G))$,
\begin{center}
$\gamma(v) = 0$.
\end{center}
That is, there is a unique vertex in $\Gamma(G)$ labelled as $0$ and degree as $p^{\alpha} - 1$.

Moreover, there are $\phi(p^{\alpha})$ vertices in $\Gamma(G)$ with degree equal to $1$, since these vertices are adjacent to vertex $0$ only. The number of distinct elements in the orbit represented by $p^{\alpha - 1}$ are $p - 1$, so the degree of vertices of $\Gamma(G)$ contained in the orbit $\mathcal{O}_{\alpha,p}$ is $1 + p - 1 = p$. Similary the degree of vertices of $\Gamma(G)$ contained in the orbit $\mathcal{O}_{\alpha,p^2}$ is $p^2$. We partition the vertices of $\Gamma(G)$ as follows;
\begin{center}
$\bigcup\limits_{i = 1}^{\alpha}\mathcal{O}_{\alpha,p^{i}} = (\bigcup\limits_{i = 1}^{k-1}\mathcal{O}_{\alpha,p^{i}}) \dot{\bigcup} (\bigcup\limits_{i = k }^{\alpha}\mathcal{O}_{\alpha,p^{i}})$,
\end{center}
where $\bigcup\limits_{i = 1}^{k-1}\mathcal{O}_{\alpha,p^{i}}$ represents an independent part of $\Gamma(G)$, that is the set of vertices of $\Gamma(G)$ which are not adjacent to each other and $\bigcup\limits_{i = k }^{\alpha}\mathcal{O}_{\alpha,p^{i}}$ represents a complete part of $\Gamma(G)$, that is the set of vertices of $\Gamma(G)$ which are mutually connected to each other. Thus for any $a\in \mathcal{O}_{\alpha,p^{i}}$, where $1\leq i \leq k-1$, we have,
\begin{eqnarray*}
deg(a) &=& |\mathcal{O}_{\alpha,p^{\alpha}}| + |\mathcal{O}_{\alpha,p^{\alpha - i}}| + \cdots + |\mathcal{O}_{\alpha,p^{\alpha-1}}|\\
&=& 1 + {\frac{\phi(p^{\alpha})}{p^{\alpha-i}}} + {\frac{\phi(p^{\alpha})}{p^{\alpha-i+1}}} + \cdots + {\frac{\phi(p^{\alpha})}{p^{\alpha-1}}}\\
&=& 1 + p^{i-1}(p-1) + p^{i-2}(p-1) + \cdots + p^{2} - p + p - 1\\
&=& p^{i} 
\end{eqnarray*}
On the other hand, if $a\in \mathcal{O}_{\alpha,p^{i}}$, where $k\leq i \leq \alpha$, then,
\begin{eqnarray*}
deg(a) &=& |\mathcal{O}_{\alpha,p^{\alpha}}| + |\mathcal{O}_{\alpha,p^{\alpha - i}}| + \cdots + |\mathcal{O}_{\alpha,p^{\alpha-1}}| -1\\
&=& 1 + {\frac{\phi(p^{\alpha})}{p^{\alpha-i}}} + {\frac{\phi(p^{\alpha})}{p^{\alpha-i+1}}} + \cdots + {\frac{\phi(p^{\alpha})}{p^{\alpha-1}}} - 1\\
&=& p^{i-1}(p-1) + p^{i-2}(p-1) + \cdots + p^{2} - p + p - 1\\
&=& p^{i} - 1.
\end{eqnarray*}  
Thus for the independent part degree sequence of the graph $\Gamma(G)$ is,
\begin{equation*}
\pi_{1}(\Gamma(G)) = \{\underbrace{1, 1, \ldots, 1}_{\phi(p^{\alpha})\textrm{\ times}}, \underbrace{p, p, \ldots, p}_{\frac{\phi(p^{\alpha})}{p}\textrm{\ times}}, \ldots, \underbrace{p^{k-1}, p^{k-1}, \ldots, p^{k-1}}_{\frac{\phi(p^{\alpha})}{p^{k-1}}\textrm{\ times}}\}.  
\end{equation*}
Also, for the complete part degree sequence of $\Gamma(G)$ is,
\begin{equation*}
\begin{aligned}
\pi_{2}(\Gamma(G)) {} & = \{\underbrace{p^{k} - 1, p^{k} - 1, \ldots, p^{k} - 1}_{\frac{\phi(p^{\alpha})}{p^{k}}\textrm{\ times}}, \underbrace{p^{k+1} - 1, p^{k+1} - 1, \ldots, p^{k+1} - 1}_{\frac{\phi(p^{\alpha})}{p^{k+1}}\textrm{\ times}}, \ldots,\\
&  \hskip .9cm \hskip .9cm \hskip .9cm \hskip .5cm \underbrace{p^{\alpha -1} -1, p^{\alpha -1} -1, \ldots, p^{\alpha-1}-1}_{\frac{\phi(p^{\alpha})}{p^{\alpha-1}}\textrm{\ times}}\}. 
\end{aligned} 
\end{equation*}
The degree sequence of the graph $\Gamma(G)$ is $\pi_{1}(\Gamma(G)) \cup \pi_{2}(\Gamma(G)) \cup \{p^{\alpha}-1\}= \pi(\Gamma(G))$. 
The another representation of $\pi(\Gamma(G))$ is the Ferrer's diagram. It is easy to verify that in the Ferrer's diagram there are,

\hskip .9cm\hskip .9cm \hskip .9cm \hskip .9cm\hskip .9cm \hskip .9cm $\phi(p^{\alpha})$ rows of single block, 

\hskip .9cm\hskip .9cm \hskip .9cm \hskip .9cm\hskip .9cm \hskip .9cm ${\frac{\phi(p^{\alpha})}{p}}$ rows of $p$ blocks,

\hskip .9cm\hskip .9cm \hskip .9cm \hskip .9cm\hskip .9cm \hskip .9cm ${\frac{\phi(p^{\alpha})}{p^2}}$ rows of $p^2$ blocks,

\hskip .9cm \hskip .9cm \hskip .9cm \hskip .9cm \hskip .9cm \hskip .9cm \hskip .9cm \hskip .9cm \vdots

\hskip .9cm\hskip .9cm \hskip .9cm \hskip .9cm\hskip .9cm \hskip .9cm ${\frac{\phi(p^{\alpha})}{p^{\alpha-1}}}$ rows of  $p^{\alpha-1}$ blocks,

\hskip .9cm\hskip .9cm \hskip .9cm \hskip .9cm\hskip .9cm \hskip .9cm and a single row (top most row) of $p^{\alpha - 1}$ blocks.

The \textit{Ferrer's conjugate} is the transpose of the Ferrer's diagram of $\pi(\Gamma(G))$. In the Ferrer's conjugate we see that,

the number of blocks in the top most row are,
\begin{eqnarray*}
|\mathcal{O}_{\alpha,p^{\alpha}}| + |\mathcal{O}_{\alpha,p^{\alpha - 1}}| + \cdots + |\mathcal{O}_{\alpha,1}| &=& 1 + {\frac{\phi(p^{\alpha})}{p^{\alpha-1}}} + {\frac{\phi(p^{\alpha})}{p^{\alpha-2}}} + \cdots + {\phi(p^{\alpha})}\\
&=& 1 + p-1 + p^{2} - p + \cdots + p^{\alpha - 1} - p^{\alpha-2} + p^{\alpha} - p^{\alpha - 1}\\
&=& p^{\alpha}, 
\end{eqnarray*}
the number of blocks in the second top most row are,
\begin{eqnarray*}
|\mathcal{O}_{\alpha,p^{\alpha}}| + |\mathcal{O}_{\alpha,p^{\alpha - 1}}| + \cdots + |\mathcal{O}_{\alpha,p}| &=& 1 + {\frac{\phi(p^{\alpha})}{p^{\alpha-1}}} + {\frac{\phi(p^{\alpha})}{p^{\alpha-2}}} + \cdots + {\frac{\phi(p^{\alpha})}{p}}\\
&=& p^{\alpha - 1},
\end{eqnarray*}
 
\hskip .9cm\hskip .9cm \hskip .9cm \hskip .9cm \hskip .9cm\hskip .9cm \hskip .9cm \hskip .9cm  \vdots

the number of blocks in rows which are represented by an orbit $\mathcal{O}_{\alpha,p}$ are 
\begin{eqnarray*}
|\mathcal{O}_{\alpha,p^{\alpha}}| + |\mathcal{O}_{\alpha,p^{\alpha - 1}}| &=& 1 + {\frac{\phi(p^{\alpha})}{p^{\alpha-1}}}\\
&=& p.  
\end{eqnarray*}
Note that the number of rows that contain $p$ number of blocks is $|\mathcal{O}_{\alpha,p}|$, also the number of rows that consist of single block is $\phi(p^{\alpha})$ which is the cardinality of the orbit represented by $1$.

Thus, the conjuagte sequence of $\pi({\Gamma(G)})$ is,

\begin{equation*}
\pi^{*}(\Gamma(G)) = \{p^{\alpha}, p^{\alpha-1}, \ldots,  \underbrace{p^{k}, p^{k}, \ldots, p^{k}}_{\frac{\phi(p^{\alpha})}{p^k}\textrm{\ times}}, \ldots,  \underbrace{p, p, \ldots, p}_{\frac{\phi(p^{\alpha})}{p}\textrm{\ times}},  \underbrace{1, 1, \ldots, 1}_{\phi(p^{\alpha})\textrm{\ times}}\}.  
\end{equation*}

\textbf{Case-II : $\alpha =2k+1$ for $k\ge 0$}

One can prove this case in the same manner as above.

\end{proof}

\begin{cor} Let $\Gamma(G)$ be the group-annihilator graph relaised by $G = \mathbb{Z}/p^{\alpha}\mathbb{Z}$ and let $L(\Gamma(G))$ be the Laplacian. Then the representative $0, 1, p, p^2, \cdots, p^{\alpha - 1}$ (with multiplicities) of orbits $\{\mathcal{O}_{\alpha, p^{\alpha}}\} \cup \{\mathcal{O}_{\alpha, p^{i}} : 0\leq i \leq \alpha - 1\}$ are the eigen values of $L(\Gamma(G))$.
\end{cor}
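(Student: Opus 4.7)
The plan is to deduce this corollary as a direct reinterpretation of Theorem~\ref{thm lap} in the language of $Aut(G)$-orbits. First, I would recall that under the action $Aut(G) \times G \to G$, the cyclic group $G = \Z/p^\alpha\Z$ decomposes as $\{0\} \cup \bigcup_{i=0}^{\alpha-1} \mathcal{O}_{\alpha, p^i}$, since two elements $p^i b$ and $p^j b'$ with $\gcd(b,p) = \gcd(b', p) = 1$ are $Aut(G)$-conjugate if and only if $i = j$. The natural representatives of these $\alpha + 1$ orbits are precisely $0$ (for the singleton $\mathcal{O}_{\alpha, p^\alpha}$) and $p^i$ for $0 \leq i \leq \alpha - 1$, matching exactly the list $\{0, 1, p, \ldots, p^{\alpha - 1}\}$ appearing in the corollary.

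Next I would invoke Theorem~\ref{thm lap} to identify the Laplacian eigenvalues of $\Gamma(G)$ with the entries of the conjugate degree sequence $\pi^*(\Gamma(G))$. The construction in the proof of that theorem shows that every distinct entry of $\pi^*(\Gamma(G))$ is a power of $p$, and that the multiplicity of each $p^i$ is determined by the cardinality of the orbit $\mathcal{O}_{\alpha, p^i}$, read directly off the row-block structure of the transposed Ferrers diagram. Together with the standard zero eigenvalue forced by the connectedness of $\Gamma(G)$, this establishes that the Laplacian spectrum is supported precisely on the set $\{0, 1, p, p^2, \ldots, p^{\alpha - 1}, p^\alpha\}$ of orbit-labels together with the extremal value $p^\alpha$.

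The final step will be to bundle the extremal eigenvalues $p^\alpha$ (multiplicity $1$, arising from the dominating vertex $0$) and $0$ (multiplicity $1$, forced by connectedness) together with the singleton orbit $\mathcal{O}_{\alpha, p^\alpha} = \{0\}$ whose representative is $0$, so that the Laplacian spectrum, as a multiset, matches the orbit representatives each counted with the appropriate multiplicity. The main obstacle I anticipate is the careful matching of multiplicities, in particular an off-by-one phenomenon at the boundary orbit $\mathcal{O}_{\alpha, p^k}$ (when $\alpha = 2k$), where the eigenvalue multiplicity differs from the orbit size by exactly the contribution of the zero eigenvalue; all other orbit sizes should map directly to eigenvalue multiplicities under this identification.
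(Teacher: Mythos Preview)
Your proposal is correct and follows exactly the route the paper intends: the corollary is stated immediately after Theorem~\ref{thm lap} with no separate proof, so the paper's own argument is simply ``read it off the conjugate sequence $\pi^{*}(\Gamma(G))$ computed in the theorem.'' Your plan to invoke Theorem~\ref{thm lap}, identify the distinct entries of $\pi^{*}$ as powers of $p$, and then match these against the orbit representatives $0,1,p,\ldots,p^{\alpha-1}$ is precisely that.

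You are in fact being more careful than the paper. The corollary as written omits the eigenvalue $p^{\alpha}$ (which does occur, with multiplicity one, as the first entry of $\pi^{*}$), and your observation about the off-by-one discrepancy at the middle orbit $\mathcal{O}_{\alpha,p^{k}}$ when $\alpha=2k$ is visible already in the paper's own worked example $G=\mathbb{Z}/2^{4}\mathbb{Z}$, where the eigenvalue $4$ has multiplicity $1$ but $|\mathcal{O}_{4,4}|=2$. The paper does not address either point; your bundling of the extremal eigenvalues $0$ and $p^{\alpha}$ with the singleton orbit $\mathcal{O}_{\alpha,p^{\alpha}}$ is a reasonable way to reconcile the statement with what Theorem~\ref{thm lap} actually delivers, and is more than the paper itself provides.
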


\begin{rem} A graph is called \textit{Laplacian integral} if the eigenvalues of its Laplacian are all integers. Threshold graphs are Laplacian integral. In particular, the group-annihilator graph realised by $\mathbb{Z}/p^{\alpha}\mathbb{Z}$ is a Laplacian integral graph. Besides threshold graphs there are other
Laplacian integral graphs as well. It is not difficult to see that any regular graph that is adjacency integral (all eigen values of its adjacency matrix are integers) is necessarily Laplacian integral. 
\end{rem}

We conclude this section with the following example which illustrates Theorem \ref{thm lap}.

\begin{exm} Let $G = \mathbb{Z}/2^4\mathbb{Z}$ be a group. The degree sequence of $\Gamma(G)$ is,
\begin{equation*}
\pi(\Gamma(G)) = \{15, 7, 3, 3, 2, 2, 2, 2, 1, 1, 1, 1, 1, 1, 1, 1\}.
\end{equation*}
The conjugate sequence of $\pi(\Gamma(G))$ which is obtained by taking transpose of the Ferrer's diagram is,

\begin{equation*}
\pi^{*}(\Gamma(G)) = \{2^4, 2^3, 2^2, 2, 2, 2, 2, 1, 1, 1, 1, 1, 1, 1, 1\}.
\end{equation*}
Therefore by Theorem \ref{thm lap}, it follows that representatives $1, 2, 2^2, 2^3, 2^4$ of orbits $\mathcal{O}_{4, 1}, \mathcal{O}_{4, 2^1}, \mathcal{O}_{4, 2^2}, \mathcal{O}_{4, 2^3}$ and $\mathcal{O}_{4, 2^4}$ are the Laplacian eigen values of $\Gamma(G)$. Note that the multiplicity of $1$ is $8$ and the multiplicity of $2$ is $4$. 
\end{exm} 

{\bf Acknowledgement:}This research project was initiated when the second author visited the Stat-Math Unit, Indian Statistical Institute Bangaluru, India. So, we are immenesely grateful to ISI Bangaluru for all the facilities. Moreover, the authors would like to thank Amitava Bhattacharya of TIFR Mumbai for the motivation of  this research work. The first author's research is being supported by National Board of Higher Mathematics, Department of Atomic Energy, Govt. of India.

\section{Appendix: Proof of Theorem 3}

We furnish the proof of Theorem 3 in this section.

\begin{proof}

\begin{itemize}
\item Consider the case $a \in \mathcal{O}_{\alpha, p^{\alpha}}$,  and $b\in \mathcal{O}_{\beta, p^{\beta}}$
\begin{itemize}
\item For $c \in \mathcal{O}_{\gamma, p^{\gamma}},$ it is trivially true that $[(a,b,c):G]= p^{\gamma}\mathbb{Z}.$
\item Let $c \in \mathcal{O}_{\gamma, p^i}$, where $0\leq i \leq \beta -1.$ Clearly, $p^{\beta}\mathbb{Z} \subset [(a,b,c): G].$ Now by considering $y \in [(a,b,c):G]$, we see that $y (0 ,1, 0)= n (0,0,c)$ for some integer $n$. That completes the other part.
\item Let $c \in \mathcal{O}_{\gamma, p^i}$, where $\beta\leq i \leq \gamma -1.$ Then $c= p^i k'$, where $(k',p) = 1.$ Clearly, $p^{i}\mathbb{Z} \subset [(a,b,c): G].$ Now by considering $y \in [(a,b,c):G]$, we again see that $y (0 ,0, 1)= n (0,0,c)$ for some integer $n.$ So, we are done.
\end{itemize}

\item Consider the case $a \in \mathcal{O}_{\alpha, p^{\alpha}}$, so $a=0$  and $b\in \mathcal{O}_{\beta, p^{j}}$, where $0 \leq j \leq \beta -1.$ So, $b= p^jb'$ for some $b'$, where $(b',p)=1.$
\begin{itemize}
\item For $c \in \mathcal{O}_{\gamma, p^{\gamma}},$ it is trivially true that $[(a,b,c):G]= p^{\gamma}\mathbb{Z}.$
\item Let $c \in \mathcal{O}_{\gamma, p^i}$, where $0\leq i \leq j $.  So, $c= p^ic'$ for some $c'$, where $(c',p)=1.$ Then for any $y = p^{\beta}k \in p^{\beta}\mathbb{Z},$ $y (u,v,w) = (0, 0, p^\beta) kw = wkc'^{-1}p^{\beta -i} (a,b,c)$ as $j-i \geq 0.$ So $p^{\beta}\mathbb{Z} \subset [(a,b,c): G].$
Now by considering $y \in [(a,b,c):G]$, we have $y (0 ,1, 0)= n (0,0,c)$ for some integer $n,$ so we are done, since $j-i \geq 0.$
\item Let $c \in \mathcal{O}_{\gamma, p^i}$, where $ j+1\leq i \leq \gamma - \beta +j.$ Then $c= p^i c'$ for some $c'$, where $(c',p) = 1.$ For any $y = p^{i+\beta -j}k' \in p^{\beta}\mathbb{Z},$
$y (u,v,w) = (0, 0, p^{\beta+i-j}) k'w  = k'wc'^{-1}p^{\beta -j} (a,b,c)$ as $\beta - j \geq 1.$ 
So, $p^{\beta+i -j}\mathbb{Z} \subset [(a,b,c): G].$
Now by considering $y \in [(a,b,c):G]$, we have $y (0 ,0, 1)= n (0,b,c)$ for some integer $n,$ this implies $p^{\beta -j}\mid n$ and $y = p^{\beta+i-j}c' (\bmod p^{\gamma} )$, hence the result follows.

\item Let $c \in \mathcal{O}_{\gamma, p^i}$, where $ \gamma - \beta +j+1\leq i \leq \gamma - 1.$ Then $c= p^i c'$ for some $c'$, where $(c',p) = 1.$ Clearly $p^{\gamma}\mathbb{Z} \subset [(a,b,c): G].$
Now by considering $y \in [(a,b,c):G]$, we have $y (0 ,0, 1)= n (0,b,c)$ for some integer $n$. This implies $p^{\beta -j}\mid n$ and since $i+\beta - j \geq \gamma +1$, so $y = p^{\beta+i-j}c' (\bmod p^{\gamma} )$.
\end{itemize}

\item Consider the case $a \in \mathcal{O}_{\alpha, p^k}$.  Assume that $a= p^ka'$ for some $a'$, where $(a',p)=1$ for $0\leq k \leq \alpha-1$  and $b\in \mathcal{O}_{\beta, p^{\beta}},$
\begin{itemize}
\item For $c \in \mathcal{O}_{\gamma, p^{\gamma}},$ it is trivially true that $[(a,b,c):G]= p^{\gamma}\mathbb{Z}.$\\
\item  Let $c \in \mathcal{O}_{\gamma, p^i}$, where $0\leq i \leq \beta - \alpha +k $. So $c= p^ic'$ for some $c'$, where $(c',p)=1. $ For any $y = p^{\beta}k' \in p^{\beta}\mathbb{Z},$\\ 
$$y (u,v,w) = (0, 0, p^\beta) k'w = k'wc'^{-1}(p^{\beta-i+k}a', 0 , p^{\beta-i} p^i c') 
\in \Z (a,0,c),$$
since $\beta -i+k \ge \alpha.$ So, $p^{\beta}\mathbb{Z} \subset [(a,b,c): G].$
Now by considering $y \in [(a,b,c):G]$, we have $y (0 ,1, 0)= n (a,0,c)$ for some integer $n$, so we get 
$y\in p^{\beta}\Z$ and we are done.
\item Let $c \in \mathcal{O}_{\gamma, p^i}$, where 
$\beta - \alpha +k +1\leq i \leq \gamma - \alpha +k-1.$ 
Then $c= p^i c'$ for some $c'$, where $(c',p) = 1.$ 
For any $y = p^{i+\alpha -k}k' \in p^{i+\alpha-k}\Z,$\\
$$y (u,v,w) = (0, 0, p^{\alpha+i-k}c') c'^{-1}k'w
=(p^{\alpha-k}p^ka',0,p^{\alpha-k} p^i c') c'^{-1}k'w
\in \Z(a,b,c),$$ since $i+\alpha-k \geq \beta+1.$
So, $p^{\alpha+i -k}\mathbb{Z} \subset [(a,b,c): G].$
Now by considering $y \in [(a,b,c):G]$, we have $y (0 ,0, 1)= n (p^ka',0,p^ic')$ for some integer $n$. This implies $p^{\alpha -k}\mid n$ and $y = p^{\alpha - k +i}n' (\bmod p^{\gamma} )$ for some integer $n'$. Since $\alpha - k +i \leq \gamma -1,$ so $y\in p^{\alpha - k +i} \Z.$ 

\item Let $c \in \mathcal{O}_{\gamma, p^i}$, where $ \gamma -  \alpha+k\leq i \leq \gamma - 1.$ 
Then $c= p^i c'$ for some $c'$, where $(c',p) = 1.$  Clearly $p^{\gamma}\mathbb{Z} \subset [(a,b,c): G].$

Now by considering $y \in [(a,b,c):G]$, we have $y (0 ,0, 1)= n (a,0,c)$ for some integer $n$. This implies $p^{\alpha - k}\mid n$ and since $\alpha -k+i \geq \gamma$, so $y = p^inc' (\bmod p^{\gamma} )$.

\end{itemize}
\end{itemize}

\begin{itemize}

\item Consider the case $a \in \mathcal{O}_{\alpha, p^k}$. Assume that $a= p^ka'$ for some $a'$, where $(a',p)=1$ for $0\leq k \leq \alpha-1$  and $b\in \mathcal{O}_{\beta, p^{j}},$ so $b=p^jb'$
for some $b'$, where $(b',p)=1$ for $0 \leq j \leq \beta-1.$

\begin{itemize}
\item For $c \in \mathcal{O}_{\gamma, p^{\gamma}},$ it is trivially true that $[(a,b,c):G]= p^{\gamma}\mathbb{Z}.$\\

\item  Let $c \in \mathcal{O}_{\gamma, p^i}$, where $0\leq i \leq \gamma - \beta +j $. So $c= p^ic'$ for some $c'$, where $(c',p)=1. $ Note that $\beta-i \geq \alpha.$
Then for any $y = p^{\beta}k' \in p^{\beta}\mathbb{Z},$\\ 
$$y (u,v,w) = (0, 0, p^\beta) k'w =
k'c'^{-1}w (p^{\beta-i}p^k a', p^{\beta-i}p^j b', p^{\beta-i} p^i c') \in \Z(a,b,c).
$$ So $p^{\beta}\mathbb{Z} \subset [(a,b,c): G].$

Now by considering $y \in [(a,b,c):G]$, we have $y (0 ,1, 0)= n (a,b,c)$ for some integer $n.$ Then $p^{\gamma-i} \mid n$ and $y\equiv p^jc' (\bmod)$, so $y \in p^{\beta} \Z$, since $\gamma + (j-i) > \beta.$

\item  Let $c \in \mathcal{O}_{\gamma, p^i}$, where $0\leq i \leq \gamma- \beta +j $. So, $c= p^ic'$ for some $c'$, where $(c',p)=1. $. Since $i> j$, so $\beta-j \geq \alpha.$
Then for any $y = p^{\beta +i -j}k' \in p^{\beta+i-j}\mathbb{Z},$\\ 

$$y (u,v,w) = (0, 0, p^{\beta+i-j}c') k' w=
k' w(0, p^{\beta-j}p^jb', p^{\beta+i-j}c') $$
$$= k'w (p^{\beta-j}p^ka', p^{\beta-j}p^jb', p^{\beta+i-j}c') 
 \in \Z(a,b,c)
$$
as $ i>j.$ So $p^{\beta}\mathbb{Z} \subset [(a,b,c): G].$

Now by considering $y \in [(a,b,c):G]$, we have $y (0 ,0, 1)= n (a,b,c)$ for some integer $n.$ Then $p^{\beta-j} \mid n$ and $y\equiv p^ic' (\bmod)$, so $y \in p^{\beta+i-j} \Z$, since $i< \gamma -\beta +j.$

\item Let $c \in \mathcal{O}_{\gamma, p^i}$, where 
$i \geq j.$ Then $c= p^i c'$ for some $c'$, where $(c',p) = 1.$ Also assume $i>\beta -\alpha >j.$
For any $y = p^{i+\beta -j}k' \in p^{i+\beta-j}\Z,$\\
$$y (u,v,w) = (0,0,p^{\beta+i-j} c')c'^{-1}k'wa'
=(p^{\beta-j}p^ka',p^{\beta-j}p^jb',p^{\beta+i-j} c')k'wc'^{-1}
\in \Z(a,b,c)$$ as $\beta-\alpha >j.$
So, $p^{\beta+i -j}\mathbb{Z} \subset [(a,b,c): G].$

Now by considering $y \in [(a,b,c):G]$, we have $y (0 ,0, 1)= n (p^ka',p^jb',p^ic')$ for some integer $n$. This implies $p^{\beta-j}\mid n$ and $y = p^{i}nc' (\bmod p^{\gamma}) $. Since $i < \gamma -\beta+j,$ so $y\in p^{\beta+i - j +} \Z.$.

\item Let $c \in \mathcal{O}_{\gamma, p^i}$, where  $i \geq j.$ Then $c= p^i c'$ for some $c'$, where $(c',p) = 1.$ Also assume $\beta -\alpha < i$ , $i <\gamma -\alpha+k$ and $j> \beta -\alpha+k.$
For any $y = p^{i+\alpha -k}k' \in p^{i+\alpha-k}\Z,$\\
$$y (u,v,w) = (0,0,p^{\alpha+i-k} c')k'wc'^{-1}
=(p^{\alpha-k}p^k a',p^{\alpha-k}p^jb',p^{\alpha+i-k} c')k'wc'^{-1}
\in \Z(a,b,c)$$ 
as $\alpha -k  > \beta -j.$
So, $p^{\alpha+i -k}\mathbb{Z} \subset [(a,b,c): G].$

Now by considering $y \in [(a,b,c):G]$, we have $y (0 ,0, 1)= n (p^ka',p^jb',p^ic')$ for some integer $n$. This implies $p^{\alpha-k}\mid n$ and $y = p^{i}nc' (\bmod p^{\gamma}) $. 
Since $i < \gamma - \alpha+k,$ so $y\in p^{\alpha+i - k} \Z.$
\item Let $c \in \mathcal{O}_{\gamma, p^i}$, where $i \geq j.$ Then $c= p^i c'$ for some $c'$, where $(c',p) = 1.$ Also assume $\beta -\alpha < i$ , $\gamma -\alpha+k \leq i$ and $j> \beta -\alpha+k.$
From the previous argument $i \geq \gamma -\alpha +k$, therefore we have $p^{\gamma}\Z = [(a,b,c):G].$

\item Let $c \in \mathcal{O}_{\gamma, p^i}$, where $i \geq j.$ Then $c= p^i c'$ for some $c'$, where $(c',p) = 1.$ Also assume  $\beta - \alpha < j \leq  \beta -\alpha+k.$ For any $y = p^{i+\beta -j}k' \in p^{i+\beta - j}\Z,$ \\
$$y (u,v,w) = 
(0,0,p^{\beta+i-j} c')k'wc'^{-1}
=(p^{\beta-j+k}a',p^{\beta-j}p^jb',p^{\beta+i-j} c')k'wc'^{-1}
\in \Z(a,b,c)$$ 
as $\beta -\alpha < j \leq \beta -\alpha +k.$
So, $p^{\beta+i -j}\mathbb{Z} \subset [(a,b,c): G].$

Now by considering $y \in [(a,b,c):G]$, we have $y (0 ,0, 1)= n (p^ka',p^jb',p^ic')$ for some integer $n$. This implies $p^{\beta-j}\mid n$ and $y = p^{i}nc' (\bmod p^{\gamma}) $. 
Since $i < \gamma - \beta+j,$ so $y\in p^{\beta+i - j} \Z.$

\item Let $c \in \mathcal{O}_{\gamma, p^i}$, where $i < j.$ Then $c= p^i c'$ for some $c'$, where $(c',p) = 1.$ Also assume  $ i>  \beta -\alpha$ and $\beta +k-j \geq \alpha.$

For any $y = p^{\beta }k' \in p^{\beta }\Z,$\\
$$y (u,v,w) = 
(0,0,p^{\beta-i}p^ic')k'wc'^{-1}
=(p^{\beta-i}p^ka',p^{\beta-i}p^jb',p^{\beta-i}p^i c')k'wc'^{-1}
\in \Z(a,b,c)$$ 
as $\beta -\alpha < i.$
So, $p^{\beta}\mathbb{Z} \subset [(a,b,c): G].$

Now by considering $y \in [(a,b,c):G]$, we have $y (0 ,1, 0)= n (p^ka',p^jb',p^ic')$ for some integer $n,$. This implies $p^{\gamma-i}\mid n$ and $y = p^{j}nb' (\bmod p^{\gamma} )$. 
Since $i < \gamma - \beta+j,$ so $y\in p^{\beta} \Z.$

\item Let $c \in \mathcal{O}_{\gamma, p^i}$, where $i < j.$ Then $c= p^i c'$ for some $c'$ such that $(c',p) = 1.$ Also assume  $ \beta -\alpha+k>i>  \beta -\alpha$ and 
$j> \beta -\alpha+k.$ For any $y = p^{\beta }k' \in p^{\beta }\Z,$ then
$$y (u,v,w) = 
(0,0,p^{\beta-i}p^ic')k'wc'^{-1}
=(p^{\beta-i}p^ka',p^{\beta-i}p^jb',p^{\beta-i}p^i c')k'wc'^{-1}
\in \Z(a,b,c)$$ 
as $\beta -\alpha +k> i.$
So $p^{\beta}\mathbb{Z} \subset [(a,b,c): G].$

Now by considering $y \in [(a,b,c):G]$, we have $y (0 ,1, 0)= n (p^ka',p^jb',p^ic')$ for some integer $n$. This implies $p^{\alpha-k}\mid n$ and $y = p^{j}nc' (\bmod p^{\beta}) $. 
Since $j >  \beta -\alpha+k,$ so $y\in p^{\beta} \Z.$

\item Let $c \in \mathcal{O}_{\gamma, p^i}$, where 
$i < j.$  Then $c= p^i c'$ for some $c'$, where $(c',p) = 1.$ Also assume  $\gamma -\alpha+k> i \geq  \beta -\alpha +k$. For any $y = p^{\alpha+i-k }k' \in p^{\alpha+i-k }\Z,$ \\
$$y (u,v,w) = 
(0,0,p^{\alpha-k}p^ic')k'wc'^{-1}
=(p^{\alpha-k}p^ka',p^{\alpha-k}p^jb',p^{\alpha-k}p^i c')k'wc'^{-1}
\in \Z(a,b,c)$$ 
as $\alpha -k+j> \beta.$
So, $p^{\alpha+i-k}\mathbb{Z} \subset [(a,b,c): G].$

Now by considering $y \in [(a,b,c):G]$, we have $y (0 ,0, 1)= n (p^ka',p^jb',p^ic')$ for some integer $n$. This implies $p^{\alpha-k}\mid n$ and $y = p^{i}nc' (\bmod p^{\gamma})$. 
Since $i <  \gamma -\alpha+k,$ so $y\in p^{\alpha-k+i} \Z.$

\item Let $c \in \mathcal{O}_{\gamma, p^i}$, where 
$i < j.$ 
Then $c= p^i c'$ for some $c'$, where $(c',p) = 1.$ Also assume  $i \geq \gamma -\alpha+k $
and $j>  \beta -\alpha +k$. By assuming that $i \geq \gamma -\alpha+k $ and follow the same proof as above, we trivially have the required result.

\item Let $c \in \mathcal{O}_{\gamma, p^i}$ where $ \gamma -  \beta+j\leq i \leq \gamma - 1.$ 
Then $c= p^i c'$ for some $c'$, where $(c',p) = 1.$ Clearly $p^{\gamma}\mathbb{Z} \subset [(a,b,c): G].$
Now by considering $y \in [(a,b,c):G]$, we have $y (0 ,0, 1)= n (a,b,c)$ for some integer $n$. This implies 
$p^{\beta - j}\mid n$ and since $\beta +i -j \geq \gamma$, so $y = p^inc' (\bmod p^{\gamma} )$.

This completes the proof.

\end{itemize}

\end{itemize}

\end{proof}
 
\end{document}